\documentclass[sn-mathphys, Numbered]{sn-jnl}
\usepackage{graphicx}
\usepackage{multirow}
\usepackage{amsmath,amssymb,amsfonts}
\usepackage{amsthm}
\usepackage{mathrsfs}
\usepackage[title]{appendix}
\usepackage{xcolor}
\usepackage{textcomp}
\usepackage{manyfoot}
\usepackage{booktabs}
\usepackage{algorithm}
\usepackage{algorithmicx}
\usepackage{algpseudocode}
\usepackage{listings}
\usepackage[utf8]{inputenc}
\usepackage[english]{babel}
\usepackage{enumerate}
\usepackage{color}
\usepackage{stmaryrd}
\usepackage{marginnote}
\usepackage{framed}
\usepackage{siunitx}
\usepackage{esint}
\usepackage{mathtools}
\usepackage{microtype}

\usepackage{bm}
\usepackage{scalerel}
\usepackage{tikz}
\usepackage[all]{xy} \xyoption{arc} \xyoption{color}
\usepackage{epsfig}
\usepackage{amsbsy}
\usepackage[font={footnotesize}]{caption}

\usepackage{caption}
\usepackage[skip=0cm,list=true,labelfont=it]{subcaption}
\usepackage{enumitem}

\usepackage{comment}

\newcommand{\eps}{\varepsilon}

\parindent8pt
\newtheorem{proposition}{Proposition}
\newtheorem{theorem}[proposition]{Theorem}

\newtheorem{remark}[proposition]{Remark}

\newtheorem{definition}[proposition]{Definition}

\numberwithin{equation}{section}
\numberwithin{proposition}{section}

\raggedbottom

\begin{document}

\title[Stability of structures in a 2D laminar flow]{A measure for the stability of structures immersed in a 2D laminar flow}

\author*[1]{\fnm{Edoardo} \sur{Bocchi}}\email{edoardo.bocchi@polimi.it}

\author*[1]{\fnm{Filippo} \sur{Gazzola}}\email{filippo.gazzola@polimi.it}

\affil[1]{\orgdiv{Dipartimento di Matematica}, \orgname{Politecnico di Milano}, \orgaddress{\street{Piazza Leonardo da Vinci 32}, \city{Milano} \postcode{20133}, \country{Italy}} - MUR Excellence Department 2023-2027}

\abstract{We introduce a new measure for the stability of structures, such as the cross-section of the deck of a suspension bridge,
subject to a 2D fluid force, such as the lift exerted by a laminar wind. We consider a wide class of possible flows, as well as a wide class
of structural shapes. Within a suitable topological framework, we prove the existence of an optimal shape maximizing the stability.
Applications to engineering problems are also discussed.}

\pacs[MSC Classification]{35Q35, 76D05, 74F10}

\maketitle

\section{Introduction}\label{intro}

Let $L>H>0$ and consider the rectangle $R=(-L,L)\times (-H,H)$. Let $B\subset R$ be a compact domain having barycenter at the origin $(x_1, x_2) = (0,0)$ and such that $\ diam(B) \ll L, H$. We study the behavior of a stationary laminar (horizontal) fluid flow crossing $R$ and filling the domain $\Omega= R\setminus B$ with possibly asymmetric inflow and outflow. The fluid is governed by the steady 2D Navier-Stokes equations with inhomogeneous Dirichlet boundary conditions on $\partial \Omega=\partial B \cup \partial R$, see Figure \ref{OmegaB}.

\begin{figure}[h!]
\includegraphics[scale=0.65]{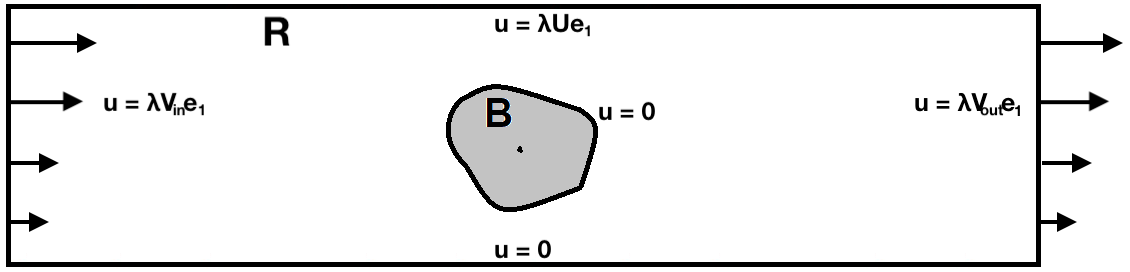}
\vspace{1em}
\caption{The fluid domain $\Omega=R\setminus B$ and the laminar inflow-outflow.}\label{OmegaB}
\end{figure}

A parameter $\lambda\ge0$, proportional to the Reynolds number, measures the strength of the inflow-outflow.
For $U\in \{0,1\}$, we consider the boundary-value problem
\begin{equation}\label{bv-pb}
\begin{aligned}
&- \Delta u + u\cdot \nabla u + \nabla p =0, \quad \nabla\cdot u=0 \quad \mbox{in} \quad \Omega,\\
{u}_{|_{\partial B}}={u}_{|_{\Gamma_b}}&=0, \quad {u}_{|_{\Gamma_t}}=\lambda U e_1, \quad {u}_{|_{\Gamma_l}}=\lambda V_{\rm in}e_1,\quad {u}_{|_{\Gamma_r}}=\lambda V_{\rm out}e_1,
\end{aligned}
\end{equation}
where $\Gamma_b,\, \Gamma_t,\, \Gamma_l,\, \Gamma_r$ denote the sides of $R$ (respectively, bottom, top, left, right) and the couple
$(V_{\rm in}, V_{\rm out})$ belongs to a suitable class of compatible inflow-outflow shapes, see Definition \ref{FU}
in Section \ref{sec-topology}. Throughout the paper we refer to $\lambda(V_{\rm in}, V_{\rm out})$ as the {\em flow}, to $(V_{\rm in}, V_{\rm out})$ as
the {\em flow shape}, and to $\lambda$ as the {\em flow magnitude}.\par
It is known \cite{PaPriDeLan10} that if the solution $(u,p)$ to \eqref{bv-pb} is regular enough, which is the case when $B$ is of class $C^{1,1}$, the (transversal) lift force exerted by the fluid on the body $B$ may be computed through the formula
\begin{equation}\label{lift-strong}
\mathcal{L}_{B}(u,p)=-e_2 \cdot \int_{\partial B} \mathbb{T}(u,p )n,
\end{equation}
where $\mathbb{T}(u,p )= \nabla u + \nabla u^T - p\mathbb{I}$ is the fluid stress tensor and $n$ is the unit outward normal vector to $\partial \Omega$ which, on $\partial B$, points towards the interior of $B$. If the solution $(u,p)$ is not regular, for instance when $B$ is merely Lipschitz, the integral in \eqref{lift-strong} is not defined but one can still compute the lift in a weak sense (see \cite{GazSpe20,BocGaz23}) by
\begin{equation}\label{lift-weak}
\mathcal{L}_B(u,p)= -e_2 \cdot \langle \mathbb{T}(u,p)n,1 \rangle_{\partial B},
\end{equation}where $\langle \cdot, \cdot \rangle_{\partial B}$ denotes the duality pairing between $W^{-2/3,3/2}(\partial B)$ and $W^{2/3,3}(\partial B)$.\par
An apparent paradox in fluid mechanics shows that a perfectly symmetric laminar flow hitting a perfectly symmetric body may generate
a transversal lift force, orthogonal to the flow direction. This paradox is only apparent because it is the {\em asymmetric
	vortex shedding arising leeward} which generates the transversal force. In \cite{GazSpe20} an ``anti-paradox'' was shown, namely that there is no
lift force if the flow has sufficiently small Reynolds number, with an explicit (but far from being sharp) upper bound: the Reynolds number needs
to be so small that it may also be undetectable numerically. Subsequently,
this anti-paradox was used to study equilibrium configurations for fluid-structure interactions (FSI) problems in perfectly symmetric frameworks
\cite{BonGalGaz20,gazzolapatriarca,ClaraNoDEA,BerBonGalGazPer}.
Recently, in \cite{BocGaz23} we considered asymmetric frameworks and we determined a threshold for the Reynolds
number under which the equilibrium configuration is unique, although not symmetric. What is missing to give up invoking the idea of paradox is to establish whether an asymmetric body under the action of an asymmetric flow can maintain its barycenter at the original symmetric position in $x_2=0$.\par
The present paper should be seen as a connection between theoretical results and applications. A first approach in the study of stability of suspension bridges, or other structures interacting with air flows, consists in wind tunnel experiments where artificial laminar flows
simulating winds are created and interact with a scaled model of a symmetric structure. Typically, the flows vanish at the top $\Gamma_t$ and
the bottom $\Gamma_b$ of the inlet-outlet, as for Poiseuille-type flows. Mathematically, this translates into the boundary-value problem
\eqref{bv-pb} with $U=0$. However, this artificial ``best-case'' scenario does not occur in nature because neither winds nor the shape of
bridges are perfectly symmetric and one is led to investigate more general configurations. In fact, for winds hitting real bridges a
Couette-type flow appears more appropriate, which mathematically translates into $U=1$ in \eqref{bv-pb}.\par
The paper is divided in three parts. In Section \ref{sec-topology} we introduce the topological tools needed to rigorously tackle the other two parts.
We define the classes of admissible bodies and flow shapes and we show their compactness in a suitable sense. Then, we derive continuous dependence results for the solution to \eqref{bv-pb} and the associated lift \eqref{lift-weak} with respect to variations of both the
body shape $B$ and the flow $\lambda (V_{\rm in}, V_{\rm out})$.\par
If the body has a symmetric shape (with respect to the horizontal line $x_2=0$)  and the flow has an even shape, there is no lift exerted by the fluid when the flow magnitude $\lambda$ is small, see \cite{GazSpe20}. But if the flow shape is asymmetric, a nonzero lift may appear also on symmetric bodies \cite{BocGaz23}.
In Section \ref{sec-windtunnel} we prove a punctual zero-lift result, which has a direct application in wind tunnel experiments. For a given flow magnitude,
we construct asymmetric bodies and flow shapes for which the lift \eqref{lift-weak} vanishes.\par
The impossibility to prevent the appearance of the lift on a prescribed body shape independent of the flow
$\lambda(V_{\rm in}, V_{\rm out})$, suggests to characterize the shapes of the body $B$ that are more stable.
In Section \ref{sec-measure} we define a new measure for the instability of a structure immersed in a planar fluid, we introduce a quantity capturing the instability of a structure in terms of the lift acting on it in a given range of flow magnitudes $\lambda$.
Stability is meant as minimization of the absolute value of the lift \eqref{lift-weak} considering a suitable class of flows $\lambda(V_{\rm in}, V_{\rm out})$ with a prescribed range of magnitudes. We prove the existence of an optimal body shape,
minimizing the instability in the class of admissible body shapes. The application to bridge design is straightforward: the optimal shape
corresponds to the most stable cross-section for the deck of a suspension bridge.

\section{Preliminary topological setting}\label{sec-topology}

\subsection{Admissible flow and body shapes}
We define the classes of flow and body shapes that will be used throughout the paper.\vspace{1em}
\begin{definition}\label{FU}
Let $U\in\{0,1\}$. We say that $(V_{\rm in},V_{\rm out})$ is an admissible flow shape with controlled norm $r>0$ if\vspace{1em}
\begin{itemize}
\item[(1)] $(V_{\rm in}, V_{\rm out})\in W^{1,\infty}(-H,H)^2$ with $\|V_{\rm in}\|_{W^{1,\infty}(-H,H)}+
\|V_{\rm out}\|_{W^{1,\infty}(-H,H)}\leq r$, \par
\item[(2)] $V_{\rm in}(-H)= V_{\rm out}(-H)=0$ and $V_{\rm in}(H)=V_{\rm out}(H)=U$,\par
\item[(3)] $\int_{-H}^{H}V_{\rm in}(x_2)dx_2 = 	\int_{-H}^{H}V_{\rm out}(x_2)dx_2=1$.\par
\end{itemize}
\vspace{1em}
The set of the admissible flow shapes with controlled norm $r>0$ is denoted by $\mathcal{F}_{r,U}$.
\end{definition}
\vspace{1em}
Physically, the controlled norm $r>0$ in Definition \ref{FU} measures the maximum strength of the flow by taking into account also its gradient.
By the compact embedding $W^{1,\infty}(-H,H)^2 \Subset L^\infty(-H,H)^2$ and the weak lower semi-continuity of the norm, the set $\mathcal{F}_{r,U}$ is closed in the weak-$\ast$ topology.
\vspace{1em}
\begin{proposition}\label{closed-flows}
	Let $U\in \{0,1\}$ and $r>0$. Assume that a sequence $\{(f_n, g_n)\}_{n\in \mathbb{N}}\subset\mathcal{F}_{r,U}$ converges weakly-$\ast$
	in $W^{1,\infty}(-H,H)^2$ to some $(f,g)\in W^{1,\infty}(-H,H)^2$. Then $(f, g)\in\mathcal{F}_{r,U}$.
\end{proposition}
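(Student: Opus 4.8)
The plan is to use compactness to upgrade the weak-$\ast$ convergence to uniform convergence on $[-H,H]$, after which each of the three conditions defining $\mathcal{F}_{r,U}$ passes to the limit with essentially no work.

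First I would record that a weak-$\ast$ convergent sequence is bounded and that, since $(f_n,g_n)\in\mathcal{F}_{r,U}$, in fact $\|f_n\|_{W^{1,\infty}(-H,H)}+\|g_n\|_{W^{1,\infty}(-H,H)}\le r$ for every $n$. Consequently $\{f_n\}$ and $\{g_n\}$ are uniformly bounded and uniformly Lipschitz on the compact interval $[-H,H]$, hence precompact in $C^{0}([-H,H])$ by the Arzel\`a--Ascoli theorem (this is the compact embedding $W^{1,\infty}(-H,H)\Subset C^{0}([-H,H])$). Thus every subsequence admits a further subsequence converging uniformly, and any such uniform limit coincides, as an $L^\infty$-class, with the weak-$\ast$ limit (test both against functions in $L^1$); by uniqueness of the weak-$\ast$ limit the whole sequences converge, $f_n\to f$ and $g_n\to g$, uniformly on $[-H,H]$, and in particular $f,g$ have Lipschitz representatives.

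With uniform convergence available, conditions (2) and (3) are immediate. Evaluating at the endpoints gives $f(-H)=\lim_n f_n(-H)=0$, $f(H)=\lim_n f_n(H)=U$ and likewise for $g$, which is (2); integrating the uniform convergence --- equivalently, testing the weak-$\ast$ convergence in $L^\infty(-H,H)$ against the constant $1\in L^1(-H,H)$ --- gives $\int_{-H}^{H}f=\lim_n\int_{-H}^{H}f_n=1$ and the same for $g$, which is (3). For (1) I would invoke the weak-$\ast$ lower semicontinuity of the norm on the dual space $L^\infty(-H,H)=(L^1(-H,H))^{\ast}$: since $f_n\to f$, $f_n'\to f'$, $g_n\to g$, $g_n'\to g'$ weakly-$\ast$ in $L^\infty(-H,H)$, one obtains $\|f\|_{W^{1,\infty}}\le\liminf_n\|f_n\|_{W^{1,\infty}}$ and $\|g\|_{W^{1,\infty}}\le\liminf_n\|g_n\|_{W^{1,\infty}}$ (apply the semicontinuity to the $L^\infty$-norms of each function and its derivative, then recombine); adding these and using superadditivity of $\liminf$ yields $\|f\|_{W^{1,\infty}}+\|g\|_{W^{1,\infty}}\le\liminf_n\big(\|f_n\|_{W^{1,\infty}}+\|g_n\|_{W^{1,\infty}}\big)\le r$. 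Hence $(f,g)\in\mathcal{F}_{r,U}$.

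The only step that deserves care is the first one: one must make sure that weak-$\ast$ convergence in $W^{1,\infty}(-H,H)$ genuinely produces uniform (indeed $C^0$) convergence of the functions themselves, since this is what legitimizes the pointwise endpoint identities in (2); this is exactly what the compact Sobolev embedding on the bounded interval provides. Everything else is a routine use of lower semicontinuity and linearity.
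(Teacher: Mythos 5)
Your proof is correct and uses exactly the two ingredients the paper itself invokes for this proposition (which it states without a detailed proof): the compact embedding $W^{1,\infty}(-H,H)^2\Subset L^\infty(-H,H)^2$ (in your version, Arzel\`a--Ascoli giving uniform convergence, which settles conditions (2) and (3)) and weak-$\ast$ lower semicontinuity of the norm for condition (1). Your write-up is simply a fleshed-out version of the paper's one-line argument, with the useful extra care of identifying the continuous representative of the limit before evaluating at the endpoints.
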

\vspace{1em}
Next, we confine the bodies to a proper subregion of $R$. This choice is motivated by the applications to wind-bridges interactions:
the range of possible shapes of the cross-section of a bridge deck lies in a confined region, typically a rectangle. Moreover, in wind
tunnel experiments the deck has to be built far away from the walls $\partial R$.	
\vspace{1em}
\begin{definition}\label{def-admbodies}
	Let $D\subset R$ be a closed rectangle homothetic to $R$ and let $0<\alpha<|D|$.
	Let $\mathcal{C}_D$ be the set of nonempty compact convex domains contained in $D$.
	We say that $B\in \mathcal{C}_D$ is an admissible body shape if $|B|=\alpha$ and we then write $B\in\mathcal{C}_{\alpha, D}$.\par
	Let $d^H(\cdot,\cdot)$ denote the Hausdorff distance between two sets. We say that a family of sets $\{B_\eps\}_{\eps >0}\subset \mathcal{C}_{\alpha,D}$
	converges to $B\in  \mathcal{C}_{\alpha,D}$ as $\eps\rightarrow 0$ in the sense of Hausdorff if
	$$d^H(B_\eps, B)\rightarrow 0 \quad \mbox{as} \quad \eps\rightarrow 0\qquad\mbox{(notation: $B_\eps  \xrightarrow{H} B$).}$$
\end{definition}

Physically, $\alpha>0$ measures the amount of concrete used to build the cross-section of the deck.
\vspace{1em}
\begin{remark}
	In Definition \ref{def-admbodies}, the rectangle $D$ can be replaced by any compact subdomain of $R$ without
	altering the results in this paper. For instance, the fact that all the domains of the family $\{B_\eps\}_{\eps>0}$ are
	contained in a compact set $D\subset R$ still allows to obtain uniform bounds in the proofs of Theorems \ref{theo-cont-flows}
and \ref{theo-cont-bodies} below.
	Also the convexity request on the admissible body shapes can be relaxed by considering, instead,
	uniformly Lipschitz shapes since the next compactness result holds also within this class. Convexity is assumed in order
	to avoid pathological limit configurations with no interest for applications in wind-bridges interactions.
\end{remark}
\vspace{1em}
This class of admissible body shapes has a crucial compactness property:
\vspace{1em}
\begin{proposition}\label{compact-bodies}
	Let $D\subset R$ and $0<\alpha<|D|$ be as in Definition \ref{def-admbodies}, then $\mathcal{C}_{\alpha,D}$ is a compact subset of $\mathcal{C}_D$ for the Hausdorff metric.
\end{proposition}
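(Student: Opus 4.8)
The plan is to reduce the statement to the Blaschke selection theorem together with two semicontinuity properties of the Lebesgue measure along Hausdorff‑convergent sequences of convex bodies. Since $d^H$ is a metric, it suffices to prove that every sequence $\{B_n\}_{n\in\mathbb{N}}\subset\mathcal{C}_{\alpha,D}$ admits a subsequence converging in the Hausdorff metric to some $B\in\mathcal{C}_{\alpha,D}$: this yields sequential compactness, hence compactness, and since $\mathcal{C}_{\alpha,D}\subset\mathcal{C}_D$ the conclusion follows.

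First I would apply the Blaschke selection theorem: as the $B_n$ are nonempty compact convex subsets of the fixed compact set $D$, a subsequence satisfies $B_{n_k}\xrightarrow{H}B$ with $B$ nonempty, compact, convex and $B\subset D$. The subtle point is that a priori $B$ could be a segment or a point, so that it need not lie in $\mathcal{C}_D$ and its area could fail to equal $\alpha$. To exclude this I would use a uniform non‑degeneracy bound: since each $B_n$ is convex, contained in $D$ and has $|B_n|=\alpha$, its minimal width $w(B_n)$ satisfies $\alpha=|B_n|\le \mathrm{diam}(B_n)\,w(B_n)\le\mathrm{diam}(D)\,w(B_n)$, hence $w(B_n)\ge w_0:=\alpha/\mathrm{diam}(D)>0$. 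Because the minimal width is $1$‑Lipschitz for the Hausdorff metric (it is the infimum over unit directions $u$ of the widths $h_K(u)+h_K(-u)$ of support‑function values, and $d^H(K,K')=\|h_K-h_{K'}\|_{L^\infty(S^1)}$), the limit satisfies $w(B)\ge w_0>0$; a planar convex set of positive width has nonempty interior, so $B\in\mathcal{C}_D$.

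It then remains to check $|B|=\alpha$, which I would obtain from two inequalities. Upper semicontinuity of the area along $d^H$‑convergent sequences is elementary and requires no convexity: writing $B^\eps$ for the closed $\eps$‑neighborhood of $B$, one has $B_{n_k}\subset B^\eps$ for $k$ large, so $\limsup_k|B_{n_k}|\le|B^\eps|$, and since $B^\eps$ decreases to $B$ as $\eps\downarrow 0$ we get $\limsup_k|B_{n_k}|\le|B|$. For the reverse inequality I would use convexity together with the non‑degeneracy just established: after a translation, $B$ contains a closed ball of radius $\rho>0$ centered at the origin, and then the support‑function identity for $d^H$ shows that for every fixed $t\in(0,1)$ one has $tB\subset B_{n_k}$ as soon as $d^H(B_{n_k},B)\le(1-t)\rho$ (indeed $h_{B_{n_k}}\ge h_B-d^H(B_{n_k},B)\ge h_B-(1-t)\rho\ge t\,h_B$ pointwise on $S^1$, since $h_B\ge\rho$ there); hence $t^2|B|\le|B_{n_k}|$ for $k$ large, and letting $t\uparrow1$ gives $|B|\le\liminf_k|B_{n_k}|$. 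Combining the two, $|B|=\lim_k|B_{n_k}|=\alpha$, so $B\in\mathcal{C}_{\alpha,D}$.

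I expect the main obstacle to be the area‑preservation step, and specifically its lower‑semicontinuity half: this is the only place where the convexity of the bodies is genuinely used, and it is also what forces the non‑degeneracy argument, since without a quantitative lower bound on the limit's interior a degenerate limit would ruin the inclusion $tB\subset B_{n_k}$. The remaining ingredients (Blaschke selection, upper semicontinuity of the area, Lipschitz continuity of the minimal width) are standard.
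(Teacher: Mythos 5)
Your argument is correct and takes essentially the same route as the paper, whose proof is just a citation of the Blaschke selection theorem combined with the standard nondegeneracy and volume-continuity properties of Hausdorff-convergent convex bodies (Schneider, Henrot--Pierre); you have simply supplied the details. The only slip is that the minimal width is $2$-Lipschitz rather than $1$-Lipschitz for the Hausdorff metric (each directional width is a sum of two support-function values), which is immaterial to the conclusion.
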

\begin{proof} It follows by combining \cite[Section 2]{HenPie18} with \cite[Theorem 1.8.20]{schneider}.\end{proof}

In the next subsections, we prove some continuity results of the lift \eqref{lift-weak} with respect to admissible flow and body shapes.

\subsection{Continuity of the lift with respect to the flow}

The functional spaces needed to establish well-posedness for \eqref{bv-pb} are the Sobolev spaces $H^1_0(\Omega)$ of vector fields and
$$ H^1_*(\Omega)=\{ u\in H^1(\Omega) \ | \ u=0 \ \mbox{ on } \ \partial B\},$$
 defined as the closure of $C^\infty_0(\overline{R}\setminus B)$  with respect to the Dirichlet norm $\|\nabla \cdot \|_{L^2(\Omega)}$.
Since the Poincar\'{e} inequality holds in this space, see \cite{GazSpe20}, the Dirichlet semi-norm is a norm also in $H^1_*(\Omega)$.
We also introduce the associated Sobolev constants
\begin{equation}\label{sob-const}
\mathcal{S}_*(\Omega)=\min\limits_{u\in H^1_*(\Omega)\setminus \{0\}} \frac{\|\nabla u\|^2_{L^2(\Omega)}}{\|u\|^2_{L^4(\Omega)}},\qquad \mathcal{S}_0(\Omega)=\min\limits_{u\in H^1_0(\Omega)\setminus \{0\}} \frac{\|\nabla u\|^2_{L^2(\Omega)}}{\|u\|^2_{L^4(\Omega)}}.
\end{equation}
Note that  $\mathcal{S}_*(\Omega) \leq \mathcal{S}_0(\Omega)$ and that both constants are non-increasing with respect to the inclusion of domains. We then consider their subspaces of solenoidal vectors
\begin{equation*}
V_*(\Omega)= \{u\in H^1_*(\Omega) \ | \ \nabla \cdot v =0  \ \mbox{in} \  \Omega\}, \qquad V(\Omega)= \{u\in H^1_0(\Omega) \ | \ \nabla \cdot v =0 \ \mbox{in} \ \Omega\}
\end{equation*}
and we recall that for $f\in V_*(\Omega)\cup V(\Omega)$, $g\in H^1(\Omega)$ and  $h\in H^1_0(\Omega)$ we have
\begin{equation}\label{trilinear}
\int_{\Omega} (f \cdot \nabla g )\cdot h= -\int_{\Omega} (f \cdot \nabla h )\cdot g\, .
\end{equation}
The scalar pressure in \eqref{bv-pb} is defined up to the addition of a constant and, hence, will be sought within the space of
zero-mean functions
\begin{equation*}L^2_0(\Omega)=\left\{p\in L^2(\Omega) \ \bigg| \ \int_{\Omega} p \ dx =0\right\}.\end{equation*}
We may now state well-posedness of \eqref{bv-pb} for small flow magnitudes:
\vspace{1em}
\begin{proposition}\label{exi-uni-bv}
	Let $B\in \mathcal{C}_{\alpha,D}$, $\Omega= R\setminus B$ and $(V_{\rm in}, V_{\rm out})\in \mathcal{F}_{r,U}$ with $U\in \{0,1\}$.
	There exists $\Lambda=\Lambda(r,D)>0$ such that, for $\lambda\in [0,\Lambda]$,
	 \eqref{bv-pb} admits a unique weak solution $(u,p)\in V_*(\Omega)\times L^2_0(\Omega)$ with flow $\lambda( V_{\rm in},  V_{\rm out})$,
	satisfying both the bounds
	\begin{equation*}
\|\nabla u\|_{L^2(\Omega)} < \mathcal{S}_0(\Omega),\qquad	\|\nabla u\|_{L^2(\Omega)} \leq C \lambda,\qquad\forall\lambda\in[0,\Lambda],
	\end{equation*}
	for $\mathcal{S}_0(\Omega)$ as in \eqref{sob-const} and some $C=C(r,D)>0$. In particular, the first bound gives rise to a uniform bound with respect to $\Omega$.
\end{proposition}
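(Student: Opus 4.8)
The plan is the classical one for the steady Navier--Stokes system with inhomogeneous Dirichlet data in the small-flow regime: subtract a solenoidal lifting of the boundary data, solve the resulting quasilinear problem for small $\lambda$ by a contraction argument, and recover the pressure by De Rham's lemma. The only structural subtlety is that every constant ($\Lambda$, $C$, and the Sobolev-type constants used along the way) must depend on $r$ and $D$ only, uniformly in the admissible body $B\in\mathcal C_{\alpha,D}$.

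\emph{Step 1: a $B$-independent solenoidal lifting.} Since $D\Subset R$, fix $\delta_0\in(0,\mathrm{dist}(D,\partial R))$ and construct $\Phi\in H^1(R)$, depending only on $(V_{\rm in},V_{\rm out},U)$, with $\nabla\cdot\Phi=0$, with boundary trace $Ue_1$ on $\Gamma_t$, $V_{\rm in}e_1$ on $\Gamma_l$, $V_{\rm out}e_1$ on $\Gamma_r$ and $0$ on $\Gamma_b$, and supported in the collar $\{x\in R:\mathrm{dist}(x,\partial R)<\delta_0\}$; this last property forces $\Phi\equiv 0$ on $D$, hence on $B$ and on $\partial B$. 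Such a $\Phi$ is obtained from a stream function: integrating the prescribed normal flux along $\partial R$ yields a single-valued Lipschitz function on $\partial R$ (single-valuedness being exactly the zero-net-flux condition guaranteed by (2)--(3) of Definition~\ref{FU}), which one extends to $\overline R$ and multiplies by a cut-off equal to $1$ near $\partial R$ and vanishing to first order on $\{\mathrm{dist}(\cdot,\partial R)\ge\delta_0\}$; then $\Phi=\nabla^\perp\psi$ has all the required properties, and condition (1) gives $\|\Phi\|_{H^1(R)}\le C(r,D)$. Restricted to $\Omega$, $\Phi\in V_*(\Omega)$.

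\emph{Step 2: reduction and a priori estimate.} With $u=\lambda\Phi+v$, the new unknown $v$ lies in $V(\Omega)$ (it vanishes on all of $\partial\Omega$) and the weak form of \eqref{bv-pb} becomes: for every $\varphi\in V(\Omega)$,
\[
\int_\Omega\nabla v:\nabla\varphi+\int_\Omega(v\cdot\nabla v)\cdot\varphi+\lambda\int_\Omega(\Phi\cdot\nabla v)\cdot\varphi+\lambda\int_\Omega(v\cdot\nabla\Phi)\cdot\varphi=\langle F_\lambda,\varphi\rangle,
\]
with $\langle F_\lambda,\varphi\rangle=-\lambda\int_\Omega\nabla\Phi:\nabla\varphi-\lambda^2\int_\Omega(\Phi\cdot\nabla\Phi)\cdot\varphi$ satisfying $\|F_\lambda\|_{V(\Omega)'}\le C(r,D)\,\lambda(1+\lambda)$ by Step~1 and the embedding $H^1(\Omega)\hookrightarrow L^4(\Omega)$. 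Choosing $\varphi=v$ and using \eqref{trilinear} (which kills both $\int(v\cdot\nabla v)\cdot v$ and $\int(\Phi\cdot\nabla v)\cdot v$) together with $\|v\|_{L^4}^2\le\mathcal S_0(\Omega)^{-1}\|\nabla v\|_{L^2}^2\le\mathcal S_0(R)^{-1}\|\nabla v\|_{L^2}^2$ --- the last inequality being the monotonicity of $\mathcal S_0$ under $\Omega\subset R$ recalled after \eqref{sob-const}, which is exactly what decouples the estimate from $\Omega$ --- one obtains
\[
\|\nabla v\|_{L^2}^2\le C(r,D)\,\lambda(1+\lambda)\,\|\nabla v\|_{L^2}+\frac{C(r,D)}{\mathcal S_0(R)}\,\lambda\,\|\nabla v\|_{L^2}^2 .
\]
For $\lambda$ small enough to absorb the last term this gives $\|\nabla v\|_{L^2}\le C(r,D)\lambda$, whence $\|\nabla u\|_{L^2}\le C(r,D)\lambda$; after a further shrinking of $\Lambda=\Lambda(r,D)$ the right-hand side is $<\mathcal S_0(R)\le\mathcal S_0(\Omega)$, which is the stated bound $\|\nabla u\|_{L^2(\Omega)}<\mathcal S_0(\Omega)$ and is uniform, being dominated by the fixed number $\mathcal S_0(R)$.

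\emph{Step 3: existence, uniqueness, pressure, and the main obstacle.} For $\lambda\le\Lambda$ the three terms linear in $v$ define a bounded bilinear form on $V(\Omega)$ which, by the same cancellation and absorption, is coercive; Lax--Milgram then makes the map $T$ sending $v$ to the solution of the corresponding linear problem with right-hand side $\langle F_\lambda,\cdot\rangle-\int(v\cdot\nabla v)\cdot(\cdot)$ well defined, and the estimates of Step~2 (shrinking $\Lambda$ once more) show that $T$ maps the ball $\{\|\nabla v\|_{L^2}\le C(r,D)\lambda\}$ into itself and is a contraction there; its fixed point yields a weak solution $u=\lambda\Phi+v$ with flow $\lambda(V_{\rm in},V_{\rm out})$ obeying the two bounds. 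Uniqueness follows from the a priori estimate alone: if $u_1,u_2$ are two solutions, $w:=u_1-u_2\in V(\Omega)$ and testing the difference of the equations with $w$ gives $\|\nabla w\|_{L^2}^2=-\int_\Omega(w\cdot\nabla u_2)\cdot w\le\mathcal S_0(\Omega)^{-1}\|\nabla u_2\|_{L^2}\|\nabla w\|_{L^2}^2$ with $\|\nabla u_2\|_{L^2}<\mathcal S_0(\Omega)$, forcing $w=0$. Finally, $\Omega$ being a bounded Lipschitz domain and the functional $\varphi\mapsto\int_\Omega\nabla u:\nabla\varphi+\int_\Omega(u\cdot\nabla u)\cdot\varphi$ vanishing on $V(\Omega)$, De Rham's lemma produces a unique $p\in L^2_0(\Omega)$, completing the solution. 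The genuinely delicate point is none of these routine ingredients but the uniformity of $\Lambda$ and $C$ in $B$: it is secured precisely by the $B$-free choice of $\Phi$ in Step~1 and by trading $\mathcal S_0(\Omega)$ for the fixed lower bound $\mathcal S_0(R)$ via domain monotonicity; in particular the compactness of $\mathcal C_{\alpha,D}$ (Proposition~\ref{compact-bodies}) is not needed here.
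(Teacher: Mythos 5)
Your argument is correct, but it is organized quite differently from the paper's. The paper's proof is essentially a citation-plus-uniformity check: existence and the bound $\|\nabla u\|_{L^2(\Omega)}<\mathcal{S}_0(\Omega)$ are imported from \cite[Theorem 3.1]{GazSpe20}, uniqueness and $\|\nabla u\|_{L^2(\Omega)}\le C\lambda$ from \cite[Theorem 2.2]{BocGaz23}, and the only work done is to observe that the constants $\Lambda$ and $C$ in those results depend on the data only through $\|(V_{\rm in},V_{\rm out})\|_{W^{1,\infty}}\le r$ and (for $U=1$) through $1/d(\partial B,\partial R)\le 1/d(\partial D,\partial R)$, hence are uniform over $\mathcal{F}_{r,U}\times\mathcal{C}_{\alpha,D}$; the ``uniform bound'' remark is obtained from $R\setminus D\subset\Omega$ and $\mathcal{S}_0(\Omega)\le\mathcal{S}_0(R\setminus D)$. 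You instead reprove the well-posedness from scratch, and you correctly identify and implement the same two uniformity mechanisms the paper relies on implicitly: (i) a solenoidal lifting supported in a collar of width $\delta_0<\mathrm{dist}(D,\partial R)$, so that its $H^1$-norm is controlled by $r$ and $D$ alone and it vanishes on every admissible $B$ (this is exactly why the cited constants degenerate only through $d(\partial B,\partial R)$); and (ii) the domain monotonicity of the Sobolev constants, which you use in the opposite direction from the paper ($\mathcal{S}_0(\Omega)\ge\mathcal{S}_0(R)$ to absorb terms and to force $\|\nabla u\|_{L^2}<\mathcal{S}_0(\Omega)$, versus the paper's $\mathcal{S}_0(\Omega)\le\mathcal{S}_0(R\setminus D)$ to read off the uniform bound). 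What your route buys is self-containedness and an explicit view of where $r$ and $D$ enter; what it costs is that you must carry the routine machinery (Hopf-type stream-function extension with its corner compatibilities, Lax--Milgram/contraction, De Rham) that the paper outsources. Two small points to tighten: your uniqueness step quietly assumes the competitor $u_2$ also satisfies $\|\nabla u_2\|_{L^2}<\mathcal{S}_0(\Omega)$ --- this is fine, but only because the energy estimate of your Step~2 applies to \emph{every} weak solution, which you should say explicitly; and note that your existence is confined to $\lambda\in[0,\Lambda]$, whereas \cite[Theorem 3.1]{GazSpe20} gives the distinguished solution with the first bound for all $\lambda$ --- harmless here, since the proposition only concerns $\lambda\in[0,\Lambda]$.
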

\begin{proof} From \cite[Theorem 3.1]{GazSpe20} we know that there exists a unique solution to \eqref{bv-pb} satisfying the first bound of the proposition. Since $B\in \mathcal{C}_{\alpha,D}$, we have that $R\setminus D \subset \Omega$ and $\mathcal{S}_0(\Omega) \leq \mathcal{S}_0(R\setminus D)$, hence yielding a uniform bound.\par In \cite[Theorem 2.2]{BocGaz23} we showed uniqueness
for $\lambda\in[0, \Lambda]$ with $\Lambda=\Lambda(V_{\rm in}, V_{\rm out}, \Omega)>0$ and derived the bound
\begin{equation*}
	\|\nabla u\|_{L^2(\Omega)}\leq C \lambda
\end{equation*} for some $C=C(V_{\rm in}, V_{\rm out}, \Omega)>0$. More precisely, both $\Lambda$ and $C$ continuously depend on the $W^{1,\infty}(-H,H)^2$-norm of $(V_{\rm in},V_{\rm out})$ and, for $U=1$, also on the inverse of the Euclidean distance $d(\partial B, \partial R)$.
 Since $(V_{\rm in},V_{\rm out})\in \mathcal{F}_{r,U}$ and $B\in \mathcal{C}_{\alpha,D}$,  we have that $(V_{\rm in},V_{\rm out})$ is uniformly bounded in $W^{1,\infty}(-H,H)^2$ and $d(\partial B, \partial R)\geq d(\partial D, \partial R)$. Thus, we obtain a uniform uniqueness threshold $\Lambda(r,D)$ and the second bound of the proposition follows.\end{proof}

In view of Proposition \ref{exi-uni-bv}, unless otherwise specified, from now on by {\em solution} of \eqref{bv-pb} we intend a weak solution
$(u,p)\in V_*(\Omega)\times L^2_0(\Omega)$.\vspace{1em}

\begin{remark}
The uniqueness threshold $\Lambda=\Lambda(r,D)$ in Proposition \ref{exi-uni-bv} is a map defined on $\mathbb{R}_+\times\mathcal{C}_R$, where $\mathcal{C}_R$ is the set of nonempty homothetic rectangles contained in $R$ ordered by the inclusion of domains. We have that  $\Lambda(\cdot,D)$ is a decreasing function, with $\Lambda(r,D)\rightarrow 0$ as $r\rightarrow +\infty$, and that $\Lambda(r,\cdot)$ is a non-increasing function. Moreover, when $U=1$,  $\Lambda(r,D)\rightarrow 0$ as $D$ approaches $R$.
\end{remark}
\vspace{1em}
Next, we study how the lift $\mathcal{L}_B(u,p)$ in \eqref{lift-weak}, associated with the unique solution $(u,p)$ to \eqref{bv-pb}, behaves
if we modify the flow. In \cite[Proposition 5.1]{BocGaz23} we proved the Lipschitz continuity of the lift with respect to the flow magnitude. Here, we prove its continuity with respect to the admissible flow shapes.\par
Fix $B\in \mathcal{C}_{\alpha,D}$, $U\in \{0,1\}$ and $\lambda\in[0,\Lambda]$. Then consider the cascade of boundary-value problems in $\Omega=R\setminus B$
\begin{equation}\label{bound-value-dataeps}
\begin{aligned}
&- \Delta u_\eps + u_\eps\cdot \nabla u_\eps + \nabla p_\eps =0, \quad \nabla\cdot u_\eps=0 \quad \mbox{in} \quad \Omega,\\
{u_\eps}_{|_{\partial B}}={u_\eps}&_{|_{\Gamma_b}}=0, \quad {u_\eps}_{|_{\Gamma_t}}=\lambda U e_1, \quad {u_\eps}_{|_{\Gamma_l}}=\lambda V^\eps_{\rm in}e_1,\quad {u_\eps}_{|_{\Gamma_r}}=\lambda V^\eps_{\rm out}e_1,
\end{aligned}
\end{equation}
with $(V^\eps_{\rm in}, V^\eps_{\rm out})\in \mathcal{F}_{r,U}$ for any $\eps>0$. We then show the continuity of the lift with respect to the weak-$\ast$ convergence of the flow shapes.
\vspace{1em}
\begin{theorem}\label{theo-cont-flows}
Let $B\in \mathcal{C}_{\alpha,D}$, $\Omega= R\setminus B$, $(V_{\rm in},V_{\rm out})\in \mathcal{F}_{r,U}$ with $U\in \{0,1\}$, and $\Lambda$ as in Proposition
\ref{exi-uni-bv}. Given $\lambda\in[0,\Lambda]$, let $(u,p)\in V_*(\Omega)\times L^2_0(\Omega)$ be the unique solution to \eqref{bv-pb}.
If $\{(V^\eps _{\rm in}, V^\eps _{\rm out})\}_{\eps>0}\subset \mathcal{F}_{r,U}$ weakly-$\ast$ converges to $(V _{\rm in}, V _{\rm out})$  in $W^{1,\infty}(-H,H)^2 $ as $\eps \rightarrow 0$, then
\eqref{bound-value-dataeps} admits a unique solution $(u_\eps, p_\eps)\in V_*(\Omega)\times L^2_0(\Omega)$ for any $\eps>0$ and
\begin{equation}\label{uni-conv-lift}
\lim\limits_{\eps\rightarrow 0} \| \mathcal{L}_B (u_\eps, p_\eps) - \mathcal{L}_B (u, p)\|_{C([0,\Lambda])}=0.
\end{equation}
\begin{proof} Using the compact embedding $W^{1,\infty}(-H,H)^2\Subset H^{1/2}(-H,H)^2$, we have that $(V^\eps _{\rm in}, V^\eps _{\rm out})$ strongly converges to $(V_{\rm in},V_{\rm out})$  in $H^{1/2}(-H,H)^2 $ as $\eps\to0$. Replying the same estimates as in the proof of \cite[Theorem 3.5]{GazSpe20} yields the existence of a unique solution $(u_\eps, p_\eps)$ to \eqref{bound-value-dataeps} for $\lambda\in [0,\Lambda]$, see also Proposition \ref{exi-uni-bv}. 	
Moreover, there exists a constant $C=C(r,D)>0$ such that
		\begin{equation*}
		\|\nabla (u-u_\eps)\|_{L^2(\Omega)} +\|p-p_\eps\|_{L^2(\Omega)} \leq C \lambda\eps\leq C \Lambda\eps.
		\end{equation*}
Since $\Lambda$ is independent of $\eps$ (again by Proposition \ref{exi-uni-bv}), we infer that
		\begin{equation}\label{unilim-up}
			\lim\limits_{\eps\rightarrow 0}  \left(\|\nabla (u-u_\eps)\|_{L^2(\Omega)} +\|p-p_\eps\|_{L^2(\Omega)}\right)=0
			\end{equation}
uniformly with respect to $\lambda\in [0,\Lambda]$.
By linearity of the lift \eqref{lift-weak} with respect to $(u,p)$ and by continuity of the traces, we use \eqref{unilim-up} to deduce \eqref{uni-conv-lift}. Note that here $\mathcal{L}_B(u,p)$ does not vanish as in \cite[Theorem 3.7]{GazSpe20} since we deal with configurations that may be asymmetric with respect to the line $x_2=0$.
	\end{proof}
\end{theorem}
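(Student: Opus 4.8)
The plan is to combine three ingredients: the uniform well-posedness already furnished by Proposition~\ref{exi-uni-bv}, a quantitative stability estimate for the solution of \eqref{bound-value-dataeps} with respect to the boundary datum, and the continuous dependence of the lift functional \eqref{lift-weak} on $(u,p)$. As a preliminary, since the embedding $W^{1,\infty}(-H,H)^2\Subset H^{1/2}(-H,H)^2$ is compact, the weak-$\ast$ convergence of $(V^\eps_{\rm in},V^\eps_{\rm out})$ to $(V_{\rm in},V_{\rm out})$ upgrades to strong convergence in $H^{1/2}(-H,H)^2$; write $\omega(\eps):=\|(V^\eps_{\rm in}-V_{\rm in},\,V^\eps_{\rm out}-V_{\rm out})\|_{H^{1/2}(-H,H)^2}\to0$. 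Because $(V^\eps_{\rm in},V^\eps_{\rm out})\in\mathcal F_{r,U}$ and $B\in\mathcal C_{\alpha,D}$ for every $\eps$, Proposition~\ref{exi-uni-bv} applies with the \emph{same} threshold $\Lambda=\Lambda(r,D)$ and yields, for each $\eps>0$ and each $\lambda\in[0,\Lambda]$, a unique solution $(u_\eps,p_\eps)\in V_*(\Omega)\times L^2_0(\Omega)$ of \eqref{bound-value-dataeps} with $\|\nabla u_\eps\|_{L^2(\Omega)}\le C\lambda\le C\Lambda$, $C=C(r,D)$.

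The core step is the stability estimate
\[
\|\nabla(u-u_\eps)\|_{L^2(\Omega)}+\|p-p_\eps\|_{L^2(\Omega)}\le C\,\lambda\,\omega(\eps),\qquad C=C(r,D),
\]
uniformly in $\lambda\in[0,\Lambda]$, which is exactly \eqref{unilim-up}. I would prove it by lifting the boundary data: construct solenoidal extensions $z,z_\eps\in H^1(\Omega)$ of the data of \eqref{bv-pb} and \eqref{bound-value-dataeps}, localized away from $\partial B$, with $\|\nabla(z-z_\eps)\|_{L^2(\Omega)}\le C\lambda\,\omega(\eps)$; here conditions (2)--(3) of Definition~\ref{FU} enter, ensuring respectively corner compatibility and zero net flux of the difference datum, so that a divergence-free lift exists and is bounded in $H^1$ by the $H^{1/2}$ norm of its trace, as in \cite{GazSpe20,BocGaz23}. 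Writing $u=z+w$, $u_\eps=z_\eps+w_\eps$ with $w,w_\eps\in V_*(\Omega)$, I subtract the weak formulations, test with $w-w_\eps$, use the antisymmetry \eqref{trilinear}, the Sobolev inequality with constant $\mathcal S_*(\Omega)$, and the a~priori bounds of Proposition~\ref{exi-uni-bv}; the smallness $\lambda\le\Lambda$ lets me absorb the quadratic terms on the left, giving the bound for $\|\nabla(w-w_\eps)\|_{L^2}$ and hence for $\|\nabla(u-u_\eps)\|_{L^2}$. The pressure difference is then recovered from the inf-sup (de~Rham) lemma applied to the equation for $(u-u_\eps,\,p-p_\eps)$. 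Since all constants depend only on $r$ and $D$ and $\omega(\eps)\to0$, the estimate is uniform in $\lambda\in[0,\Lambda]$.

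It remains to transfer \eqref{unilim-up} to the lift. Using the variational representation of \eqref{lift-weak} from \cite{GazSpe20,BocGaz23}, fix once and for all a solenoidal $\xi\in H^1(\Omega)$ (independent of $\eps$, since $B$ is fixed) with $\xi|_{\partial B}=e_2$ and $\xi|_{\partial R}=0$, so that
\[
\mathcal L_B(u,p)=-\int_\Omega\Big((\nabla u+\nabla u^T):\nabla\xi+(u\cdot\nabla u)\cdot\xi\Big),
\]
and likewise for $(u_\eps,p_\eps)$. The linear part of the difference is bounded by $C\|\nabla(u-u_\eps)\|_{L^2}\|\nabla\xi\|_{L^2}$, while the convective part, written as $(u-u_\eps)\cdot\nabla u+u_\eps\cdot\nabla(u-u_\eps)$, is bounded via Hölder and Sobolev by $C\big(\|\nabla u\|_{L^2}+\|\nabla u_\eps\|_{L^2}\big)\|\nabla(u-u_\eps)\|_{L^2}\le C\Lambda\|\nabla(u-u_\eps)\|_{L^2}$. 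Hence $|\mathcal L_B(u_\eps,p_\eps)-\mathcal L_B(u,p)|\le C(r,D,\Lambda)\|\nabla(u-u_\eps)\|_{L^2(\Omega)}$, and taking the supremum over $\lambda\in[0,\Lambda]$ with \eqref{unilim-up} gives \eqref{uni-conv-lift}. If one keeps the pressure instead, $\xi$ may be an arbitrary extension and one picks up an extra term $C\|p-p_\eps\|_{L^2}$, again absorbed by \eqref{unilim-up}.

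The main obstacle I anticipate is the \emph{uniformity in $\lambda$ and $\eps$} of the stability estimate: the lift $z_\eps$, the absorption of the nonlinearity and the pressure recovery must all be carried out with constants depending only on $r$ and $D$ — in particular invoking $d(\partial B,\partial R)\ge d(\partial D,\partial R)$ when $U=1$ — so that $\Lambda$ is a genuine threshold for the whole family. A secondary subtlety is that $\mathbb T(u,p)n|_{\partial B}$ is well defined in $W^{-2/3,3/2}(\partial B)$ only through the equation; the ``linearity of the lift'' therefore means linearity of $(u,p,u\cdot\nabla u)\mapsto\mathbb T(u,p)n$ together with the continuity of $u\mapsto u\cdot\nabla u$ from $H^1(\Omega)$ into $L^{3/2}(\Omega)$, which is precisely what the displayed variational identity encodes.
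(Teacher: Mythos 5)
Your proposal is correct and follows essentially the same route as the paper: compactness of $W^{1,\infty}\Subset H^{1/2}$ to upgrade the weak-$\ast$ convergence, the uniform threshold $\Lambda(r,D)$ from Proposition~\ref{exi-uni-bv}, a $\lambda$-uniform $H^1\times L^2$ stability estimate for the solutions, and then passage to the lift. The only difference is that you spell out the details the paper delegates to \cite{GazSpe20,BocGaz23} — the energy argument for the stability estimate and the volume (variational) representation of the weak lift, which is indeed the rigorous content behind the paper's one-line appeal to ``linearity of the lift and continuity of the traces''.
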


\subsection{Continuity of the lift with respect to the body shape}

We study here how the lift $\mathcal{L}_B(u,p)$ in \eqref{lift-weak}, associated with the unique solution $(u,p)$ to \eqref{bv-pb}, behaves if we perturb the body shape but {\it not} its measure. Consider a family of domains $\{B_\eps\}_{\eps>0}\subset \mathcal{C}_{\alpha,D}$ and the related problems, set in $\Omega_\eps=R\setminus B_\eps$:
\begin{equation}\label{bound-value-eps}
\begin{aligned}
&- \Delta u_\eps + u_\eps\cdot \nabla u_\eps + \nabla p_\eps =0, \quad \nabla\cdot u_\eps=0 \quad \mbox{in} \quad \Omega_\eps,\\
{u_\eps}_{|_{\partial B_\eps}}={u_\eps}_{|_{\Gamma_b}}&=0, \quad {u_\eps}_{|_{\Gamma_t}}=\lambda U e_1, \quad {u_\eps}_{|_{\Gamma_l}}=\lambda V_{\rm in}e_1,\quad {u_\eps}_{|_{\Gamma_r}}=\lambda V_{\rm out}e_1.
\end{aligned}
\end{equation}

We prove the continuity of the lift with respect to the Hausdorff convergence of the body shapes on which the lift acts.
\vspace{1em}
\begin{theorem}\label{theo-cont-bodies}
	Consider a family $\{B_{\eps}\}_{\eps>0}\subset \mathcal{C}_{\alpha,D}$ and $B\in\mathcal{C}_{\alpha,D}$ such that $B_\eps \xrightarrow{H}B$ as $\eps\rightarrow 0$. Let $\Omega_\eps= R\setminus B_\eps$, $\Omega=R\setminus B$, $(V_{\rm in}, V_{\rm out})\in \mathcal{F}_{r,U}$ with $U\in \{0,1\}$ and $\Lambda$ as in Proposition \ref{exi-uni-bv}. Given $\lambda\in[0,\Lambda]$, let $(u,p)\in V_*(\Omega)\times L^2_0(\Omega)$ be the unique solution to \eqref{bv-pb}. Then, for any $\eps >0$, \eqref{bound-value-eps} admits a unique solution $(u_\eps, p_\eps)\in V_*(\Omega_\eps)\times L^2_0(\Omega_\eps)$  with $\lambda$ in the same interval.  Moreover,
\begin{equation}\label{contlift}
\lim\limits_{\eps\rightarrow 0}\mathcal{L}_{B_\eps}(u_\eps, p_\eps) =\mathcal{L}_{B}(u, p).
\end{equation}
\end{theorem}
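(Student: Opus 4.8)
The plan is to transplant the solutions, which live on the $\eps$-dependent domains $\Omega_\eps$, onto the fixed domain $R$, and to express the lift through a domain integral that ignores the moving boundary $\partial B_\eps$. The well-posedness assertion is immediate: since $B_\eps\in\mathcal{C}_{\alpha,D}$ for every $\eps$, Proposition \ref{exi-uni-bv} applies with the \emph{same} threshold $\Lambda=\Lambda(r,D)$ and provides, for $\lambda\in[0,\Lambda]$, a unique $(u_\eps,p_\eps)\in V_*(\Omega_\eps)\times L^2_0(\Omega_\eps)$ together with the uniform bounds $\|\nabla u_\eps\|_{L^2(\Omega_\eps)}<\mathcal{S}_0(R\setminus D)$ and $\|\nabla u_\eps\|_{L^2(\Omega_\eps)}\le C(r,D)\,\lambda$. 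The content of the theorem is thus \eqref{contlift}.

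I would first fix, once and for all, a solenoidal lifting $\Phi\in H^1(R)$ of the boundary data of \eqref{bv-pb} on $\partial R$, supported in a neighbourhood of $\partial R$ disjoint from $D$ (it exists by the flux compatibility in Definition \ref{FU}), and a divergence-free field $\zeta\in C^\infty(\overline R)$ with $\zeta\equiv e_2$ on a neighbourhood of $D$ and $\zeta\equiv0$ near $\partial R$ (possible since $\int_{\partial D}e_2\cdot n=0$). Writing $u_\eps=z_\eps+\Phi$ with $z_\eps\in V(\Omega_\eps)$, the extension $\widetilde z_\eps$ of $z_\eps$ by zero lies in $H^1_0(R)$ and is divergence-free on $R$ — here the no-slip condition $z_\eps=0$ on $\partial B_\eps$ is essential — so that $\widetilde u_\eps:=\widetilde z_\eps+\Phi\in H^1(R)$ is divergence-free, agrees with $u_\eps$ on $\Omega_\eps$ and vanishes on $B_\eps$. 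Since $B_\eps\subset D$ for all $\eps$, the field $\zeta$ equals $e_2$ on every $\partial B_\eps$ and on $\partial B$, hence it is an admissible test field in the weak formulation for each of these bodies; the representation of the weak lift \eqref{lift-weak} coming from that weak formulation (see \cite{GazSpe20,BocGaz23}), in which the pressure drops out because $\zeta$ is divergence-free, then reads
\[
\mathcal{L}_{B_\eps}(u_\eps,p_\eps)=-\int_{R}\nabla\widetilde u_\eps:\nabla\zeta-\int_{R}(\widetilde u_\eps\cdot\nabla\widetilde u_\eps)\cdot\zeta,
\]
and likewise $\mathcal{L}_{B}(u,p)=-\int_{R}\nabla\widetilde u:\nabla\zeta-\int_{R}(\widetilde u\cdot\nabla\widetilde u)\cdot\zeta$ with $\widetilde u$ the zero extension of $u$ (the integrands vanish on $B_\eps$, resp.\ $B$, so the integrals over $\Omega_\eps$, resp.\ $\Omega$, may be taken over all of $R$).

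Next I would pass to the limit. By the uniform bound of Proposition \ref{exi-uni-bv} the family $\{\widetilde u_\eps\}$ is bounded in $H^1(R)$, so along a subsequence $\widetilde u_\eps\rightharpoonup w$ in $H^1(R)$ and, by the compact embedding $H^1(R)\Subset L^4(R)$, $\widetilde u_\eps\to w$ in $L^4(R)$. From $B_\eps\xrightarrow{H}B$ together with the convexity of the bodies, every compact $K\subset\mathrm{int}(B)$ is contained in $B_\eps$ for $\eps$ small, whence $\widetilde u_\eps\equiv0$ on $K$ and $w=0$ a.e.\ on $B$; since $B$ is Lipschitz and the trace on $\partial R$ is weakly continuous, this gives $w-\Phi\in V(\Omega)$. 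To identify $w$ I would test the weak form of \eqref{bound-value-eps} against an arbitrary divergence-free $\phi\in C^\infty_c(\Omega)$ — admissible for $\Omega_\eps$ once $\eps$ is small, because $\mathrm{supp}\,\phi$ stays at positive distance from $B_\eps$, again by $B_\eps\xrightarrow{H}B$ — and let $\eps\to0$: the linear term passes to the limit by weak $H^1(R)$-convergence and the nonlinear term, after transferring the derivative onto $\phi$, by strong $L^4(R)$-convergence. Hence $w|_\Omega$ is a weak solution of \eqref{bv-pb} with flow $\lambda(V_{\rm in},V_{\rm out})$, so $w|_\Omega=u$ by the uniqueness in Proposition \ref{exi-uni-bv}; since the limit does not depend on the subsequence, $\widetilde u_\eps\rightharpoonup\widetilde u$ in $H^1(R)$ and $\widetilde u_\eps\to\widetilde u$ in $L^4(R)$ along the whole family.

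It then remains to let $\eps\to0$ in the lift formula with the \emph{fixed} $\zeta$: $\int_R\nabla\widetilde u_\eps:\nabla\zeta\to\int_R\nabla\widetilde u:\nabla\zeta$ because $\nabla\zeta\in L^2(R)$, and $\int_R(\widetilde u_\eps\cdot\nabla\widetilde u_\eps)\cdot\zeta=-\int_R(\widetilde u_\eps\otimes\widetilde u_\eps):\nabla\zeta\to-\int_R(\widetilde u\otimes\widetilde u):\nabla\zeta$ because $\widetilde u_\eps\otimes\widetilde u_\eps\to\widetilde u\otimes\widetilde u$ in $L^2(R)$; this gives \eqref{contlift}. \textbf{The main obstacle} is concentrated in the two middle paragraphs: producing a representation of the weak lift through a single auxiliary field $\zeta$ valid simultaneously for all the bodies $B_\eps$ (where the confinement $B_\eps\subset D$ is what makes this possible), and identifying the weak $H^1(R)$-limit of the transplanted velocities as an element of $V(\Omega)$, i.e.\ controlling the trace on the moving boundary $\partial B_\eps$ in the limit — which is exactly where the convexity of the bodies and the Hausdorff convergence are used, both to confine the supports of interior test functions inside $\Omega_\eps$ and to force the limit to vanish on $B$. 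Everything else is a routine passage to the limit using the uniform estimates of Proposition \ref{exi-uni-bv} and compactness on the fixed domain $R$.
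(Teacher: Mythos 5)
Your proof is correct in its essentials, but it follows a genuinely different route from the paper's. The paper compares $u$ and $u_\eps$ \emph{directly on} $\Omega_\eps$: it first assumes $B$ of class $C^{1,1}$ so that $u\in H^2(\Omega)\hookrightarrow C^{0,\nu}(\overline\Omega)$, uses this uniform continuity to show that the trace of $u$ on the moving boundary $\partial B_\eps$ is small in $H^{1/2}$, lifts that trace by a solenoidal field $s_\eps$, and runs a quantitative energy estimate on $v=u-u_\eps-s_\eps$ to get \emph{strong} convergence $\nabla u_\eps\to\nabla u$ in $L^2(\Omega_\eps)$ and $p_\eps\to p$; the lift difference is then reduced to the boundary integral $e_2\cdot\int_{\partial R}\mathbb{T}(u_\eps-u,p_\eps-p)n$, which is controlled through interior $H^2\times H^1$ regularity near $\partial R$, interpolation and trace theorems, and finally the $C^{1,1}$ hypothesis is removed by a density/triangle-inequality argument. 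You instead transplant everything to the fixed rectangle $R$, extract a weak $H^1(R)$ limit, identify it by uniqueness (a purely qualitative step), and represent the weak lift through one fixed solenoidal multiplier $\zeta$ equal to $e_2$ on a neighbourhood of $D$ — admissible simultaneously for all $B_\eps$ precisely because of the confinement $B_\eps\subset D$. This buys you a shorter argument in which the pressure disappears entirely, no regularity of $B$ beyond convex/Lipschitz is ever invoked (so no $C^{1,1}$ reduction, no trace interpolation), at the price of obtaining only weak convergence of the velocities rather than the strong convergence \eqref{continuity-u-p} that the paper's proof delivers as a by-product (and reuses implicitly elsewhere). Three points you gloss over but which do hold: the formula $\mathcal{L}_{B_\eps}=-\int_R\nabla\widetilde u_\eps:\nabla\zeta-\int_R(\widetilde u_\eps\cdot\nabla\widetilde u_\eps)\cdot\zeta$ silently discards the $\nabla u^T$ part of the stress tensor, which requires the small integration by parts using $\nabla\cdot\zeta=0$, $u_\eps=0$ on $\partial B_\eps$ and $\zeta\equiv0$ near $\partial R$; the statement that every compact subset of $\mathrm{int}(B)$ is eventually contained in $B_\eps$ is false for general Hausdorff-convergent compacta and genuinely needs the convexity you invoke; and the identification of the weak limit requires density of solenoidal $C^\infty_c(\Omega)$ fields in $V(\Omega)$, which is standard for the Lipschitz domain $\Omega=R\setminus B$.
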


\begin{proof}
Since $\{B_\eps\}_{\eps>0} \subset \mathcal{C}_{\alpha,D}$ for any $\eps>0$, Proposition \ref{exi-uni-bv} yields the existence of a unique solution $(u_\eps, p_\eps)\in V_*(\Omega_\eps)\times L^2_0(\Omega_\eps)$ to \eqref{bound-value-eps} for any $\lambda\in[0,\Lambda]$ and the related uniform bound with respect to $\eps$.\par
We study the convergence of $(u_\eps, p_\eps)$ to $(u, p)$ as $\eps\rightarrow 0$. Let us first consider the case when $B$ is a domain of class $C^{1,1}$.
Then we know from \cite[Theorem 2.2]{BocGaz23} that the unique solution to \eqref{bv-pb} is a \emph{strong} solution, that is,
$(u,p)\in H^2(\Omega)\times H^1(\Omega)$.\par
Since $B_\eps \xrightarrow{H}B$ without an ``order'' (for instance, $B_\eps$ outer-approximating $B$ as in \cite{GazSpe20}), the solution $(u,p)$ may not be
defined in $\Omega_\eps$ and/or the solution $(u_\eps,p_\eps)$ may not be defined in $\Omega$. In order to compare $u$ and $u_\eps$, we extend them by zero, respectively, in $B$ and $B_\eps$, so they are both defined in the whole rectangle $R$: we maintain the same notation for their extension.
In particular,
\begin{equation}\label{restriction}
u\equiv0\mbox{ in }\Omega_\eps\setminus \Omega\quad\mbox{and}\quad\|u\|_{H^{1/2}(\partial B_\eps \setminus \Omega)}=0.
\end{equation}
Arguing as in \cite{GazSpe20} and using the embedding $H^2(\Omega)\hookrightarrow C^{0,\nu}(\overline{\Omega})$ for any $\nu\in (0,1)$, we obtain
the uniform continuity of $u$ in $\overline\Omega$ which guarantees that there exists $\sigma_\eps>0$, with $\sigma_\eps\rightarrow 0$ as $\eps \rightarrow 0$,
such that $\|u\|_{H^{1/2}(\partial B_\eps\cap \Omega)}\leq \sigma_\eps$. Combined with \eqref{restriction}, this implies that
$\|u\|_{H^{1/2}(\partial B_\eps)}\leq \sigma_\eps$.
 Moreover, there exists a solenoidal vector field $s_\eps\in H^1(\Omega_\eps)$ and a constant $C>0$, independent of $\eps$, such that
	\begin{equation}\label{sol-ext-eps}
 {s_\eps}_{|_{\partial R}}= 0, \qquad {s_\eps}_{|_{\partial B_\eps}}= u_{|_{\partial B_\eps}}, \qquad \|\nabla s_\eps\|_{L^2(\Omega_\eps)}\leq C \|u\|_{H^{1/2}(\partial B_\eps)}\leq C \sigma_\eps.
	\end{equation}
Let $v:=u-u_\eps -s_\eps\in V(\Omega_\eps)$. Since $u$ and $u_\eps$ weakly solve, respectively, \eqref{bv-pb} and \eqref{bound-value-eps}, we infer that $v$ satisfies
\begin{equation}\label{weak-formu}
	\int_{\Omega_\eps} \nabla v:\nabla \varphi + \int_{\Omega_\eps} (v\cdot \nabla v) \cdot \varphi  + 	\int_{\Omega_\eps} \nabla s_\eps:\nabla \varphi + \int_{\Omega_\eps}f_\eps\cdot \varphi =0 \qquad \forall\varphi\in V(\Omega_\eps),
\end{equation}
where $f_\eps= v\cdot \nabla (u_\eps+s_\eps) + (u_\eps+ s_\eps) \cdot \nabla v - s_\eps\cdot \nabla s_\eps$.
Taking $\varphi=v$ in \eqref{weak-formu} and using \eqref{trilinear} yields
\begin{equation}\label{test-weak-v}
\|\nabla v\|^2_{L^2(\Omega_\eps)}=-\int_{\Omega_\eps} \nabla s_\eps : \nabla v -\int_{\Omega_\eps} (v\cdot \nabla u )\cdot v + \int_{\Omega_\eps} (s_\eps\cdot \nabla s_\eps) \cdot v.
\end{equation}
Since $u\equiv 0$ in $\Omega_\eps\setminus \Omega$, we have the uniform bounds
\begin{equation}\label{uni-bound-u}
\| \nabla u\|_{L^2(\Omega_\eps)}\leq\|\nabla u \|_{L^2( \Omega)},\qquad\|u\|_{L^4(\Omega_\eps)}\leq \|u\|_{L^4(\Omega)}.
\end{equation}
By the H\"{o}lder inequality, the continuous embeddings $ H^1_0, H^1_* \subset L^4$ with constants \eqref{sob-const} and the bounds \eqref{uni-bound-u}, we estimate the right-hand side of \eqref{test-weak-v} by
	\begin{align*}
	&\left|\int_{\Omega_\eps}\nabla s_\eps: \nabla v\right|\leq \|\nabla s_\eps\|_{L^2(\Omega_\eps)}\|\nabla v\|_{L^2(\Omega_\eps)},\\[7pt]
&	\left|\int_{\Omega_\eps} (v\cdot \nabla u)\cdot v \right|\leq \|\nabla u\|_{L^2(\Omega_\eps)}\|v\|^2_{L^4(\Omega_\eps)}\leq \frac{1}{\mathcal{S}_0(\Omega_\eps)}\|\nabla u\|_{L^2(\Omega)}\|\nabla v\|^2_{L^2(\Omega_\eps)},\\[7pt]
	&\left|\int_{\Omega_\eps} (s_\eps \cdot\nabla s_\eps) \cdot v \right|\leq \left(\|s_\eps-u\|_{L^4(\Omega_\eps)} + \|u\|_{L^4(\Omega_\eps)}\right)\|\nabla s_\eps\|_{L^2(\Omega_\eps)} \| v\|_{L^4(\Omega_\eps)}\\[2pt]& \leq \frac{1}{\sqrt{\mathcal{S}_0(\Omega_\eps)}}\left(\frac{\|\nabla s_\eps\|_{L^2(\Omega_\eps)} + \|\nabla u\|_{L^2(\Omega_\eps)}}{\sqrt{\mathcal{S}_*(\Omega_\eps)}}+ \|u\|_{L^4(\Omega)}\right)\|\nabla s_\eps\|_{L^2(\Omega_\eps)}\|\nabla v\|_{L^2(\Omega_\eps)}
	\\[2pt]& \leq \frac{1}{\sqrt{\mathcal{S}_*(\Omega_\eps)}}\bigg(\frac{\|\nabla s_\eps\|_{L^2(\Omega_\eps)}+\|\nabla u\|_{L^2(\Omega)}}{\sqrt{\mathcal{S}_*(\Omega_\eps)}} +  \frac{\|\nabla u\|_{L^2(\Omega)}}{\sqrt{\mathcal{S}_*(\Omega)}}\bigg)\|\nabla s_\eps\|_{L^2(\Omega_\eps)}\|\nabla v\|_{L^2(\Omega_\eps)}.	
 	\end{align*}
Whence, \eqref{test-weak-v} yields
\begin{eqnarray*}
\|\nabla v\|_{L^2(\Omega_\eps)} &\le& \frac{\|\nabla u\|_{L^2(\Omega)}}{\mathcal{S}_0(\Omega_\eps)}\, \|\nabla v\|_{L^2(\Omega_\eps)}\\
\ &\ & +\left[1+
\frac{\|\nabla s_\eps\|_{L^2(\Omega_\eps)}+\|\nabla u\|_{L^2(\Omega)}}{\mathcal{S}_*(\Omega_\eps)} +
\frac{\|\nabla u\|_{L^2(\Omega)}}{\sqrt{\mathcal{S}_*(\Omega)\mathcal{S}_*(\Omega_\eps)}}\right]\|\nabla s_\eps\|_{L^2(\Omega_\eps)}\, .
\end{eqnarray*}
We know from \cite{HenPie18} that the embedding constants in \eqref{sob-const} are continuous with respect to the Hausdorff convergence of domains.
In particular, there exists $\delta_\eps>0$, with $\delta_\eps \searrow 0$ as $\eps\rightarrow 0$, such that $\mathcal{S}_0(\Omega_\eps)\geq \mathcal{S}_0(\Omega)-\delta_\eps$ and $\mathcal{S}_*(\Omega_\eps)\geq \mathcal{S}_*(\Omega)-\delta_\eps$.
It follows from \eqref{sol-ext-eps} and the previous estimate that
	\begin{equation}\label{est-gradv}
	\begin{aligned}
	&\left(	1 -\frac{\|\nabla u\|_{L^2(\Omega)}}{\mathcal{S}_0(\Omega)-\delta_\eps} \right)  \|\nabla v\|_{L^2(\Omega_\eps)} \\[2pt]& \leq C \left(1+ \frac{C\sigma_\eps+ \|\nabla u\|_{L^2(\Omega)}}{S_*(\Omega)-\delta_\eps} + \frac{\|\nabla u\|_{L^2(\Omega)}}{\sqrt{(S_*(\Omega)-\delta_\eps)S_*(\Omega)}}\right)\sigma_\eps.
	\end{aligned}
	\end{equation}

By the first bound in Proposition \ref{exi-uni-bv}, there exists $\eps_0>0$ such that, for any $\eps\in(0,\eps_0)$, the parenthesis in the left-hand side of \eqref{est-gradv} is uniformly strictly positive  and the parenthesis in the right-hand side is uniformly bounded with respect to $\eps$. Using again \eqref{sol-ext-eps} and the fact that $\sigma_\eps\rightarrow 0$ as $\eps\rightarrow 0$, we infer  that  \begin{equation}\label{conv-dirichlet}\|\nabla (u_\eps-u)\|_{L^2(\Omega_\eps)}\leq 	\|\nabla s_\eps\|_{L^2(\Omega_\eps)}+ \|\nabla v\|_{L^2(\Omega_\eps)} \rightarrow 0 \quad \mbox{as}\quad \eps\rightarrow 0.\end{equation}
Furthermore, by arguing as in \cite{GazSpe20} we know that there exists $C>0$, independent of $\eps$, such that
\begin{equation*}
\|p_\eps-p\|_{L^2(\Omega_\eps)}\leq C(1+\|\nabla u_\eps\|_{L^2(\Omega_\eps)} + \|\nabla u\|_{L^2(\Omega_\eps)} )\|\nabla(u_\eps-u)\|_{L^2(\Omega_\eps)}.
\end{equation*}
Thanks to \eqref{uni-bound-u}, the uniform bound in Proposition \ref{exi-uni-bv} and the convergence \eqref{conv-dirichlet}, we then conclude that
\begin{equation}\label{continuity-u-p}
\lim\limits_{\eps\rightarrow 0}\left(\|\nabla (u_\eps-u)\|_{L^2(\Omega_\eps)}+\|p_\eps - p\|_{L^2(\Omega_\eps)}\right)=0.
\end{equation}

Next, we focus on the lift \eqref{lift-weak}. We rewrite it as \begin{equation}\label{lift-diff}
	\mathcal{L}_{B}(u,p)= e_2 \cdot \langle \mathbb{T}(u,p)n,1 \rangle_{\partial R}-e_2 \cdot \langle \mathbb{T}(u,p)n,1 \rangle_{\partial \Omega}
	\end{equation}
where, by arguing as in the proof of \cite[Theorem 2.2]{BocGaz23},
the first term in \eqref{lift-diff} can be treated as an integral. By linearity of the stress tensor, we obtain
$$\begin{aligned}
	&	\mathcal{L}_{B_\eps}(u_\eps,p_\eps)-\mathcal{L}_{B}(u,p)\\[2pt]
&=e_2\cdot\left( \int_{\partial R}\mathbb{T}(u_\eps-u,p_\eps-p)n - \int_{\Omega_\eps}\nabla \cdot \mathbb{T} (u_\eps,p_\eps)+\int_{\Omega}\nabla \cdot \mathbb{T} (u,p)\right)\\[2pt]
&= e_2\cdot\int_{\partial R}\mathbb{T}(u_\eps-u,p_\eps-p) n - \int_{\Omega_\eps}u_\eps \cdot \nabla u_{\eps,2} + \int_{\Omega}u\cdot \nabla u_2.\\[2pt]
	\end{aligned}
$$
Moreover, since $u\in V_*(\Omega)$ and $u_\eps\in V_*(\Omega_\eps)$, an integration by parts gives
\begin{equation*}
\int_{\Omega} u\cdot \nabla u_2-\int_{\Omega_\eps}u_\eps\cdot\nabla u_{\eps,2} = \int_{\partial R} u_2 u\cdot n -\int_{\partial R} u_{\eps,2} u_\eps\cdot n = 0,
\end{equation*}
where the last equality follows from the fact that $u=u_\eps$ on $\partial R$. Therefore,
\begin{equation}\label{diffshape-lift}
\mathcal{L}_{B_\eps}(u_\eps,p_\eps)-\mathcal{L}_{B}(u,p)=e_2\cdot\int_{\partial R}\mathbb{T}(u_\eps-u,p_\eps-p) n
\end{equation}
and we now estimate the term on the right-hand side of \eqref{diffshape-lift}.

Since $B_\eps \xrightarrow{H}B$ as $\eps\rightarrow 0$, there exists a domain $\Omega_0\subset \Omega_\eps$  with  interior boundary of class $C^{1,1}$ such that $\partial \Omega_0\cap \partial \Omega_\eps=\partial R$ for any $\eps\in (0,\eps_0)$ (with a possibly smaller $\eps_0$). From \cite[Theorem 2.2]{BocGaz23} we know that $(u,p)$ and $(u_\eps, p_\eps)$ belong to $H^2(\Omega_0)\times H^1(\Omega_0)$ for any $\eps\in (0, \eps_0)$ and,
using again \cite[Theorem 2.2]{BocGaz23} and the convergence \eqref{continuity-u-p}, we have that $\|u_\eps\|_{H^2(\Omega_0)}$ and $\|p_\eps\|_{H^1(\Omega_0)}$ are uniformly bounded with respect to $\eps \in (0,\eps_0)$. This implies that also $\|u_\eps- u\|_{H^2(\Omega_0)}$ and $\|p_\eps- p\|_{H^1(\Omega_0)}$ are uniformly bounded with respect to $\eps \in (0,\eps_0)$. We combine this fact with \eqref{continuity-u-p} and, by interpolation, we obtain that $u_\eps \rightarrow u$
in $H^{3/2}(\Omega_0)$ and  $p_\eps \rightarrow p$ in $H^{1/2+r}(\Omega_0)$ with $r>0$ as $\eps\rightarrow 0$. From trace-regularity results (see \cite{Gag57,Galdi-steady}) we know that
\begin{equation}\label{conv-traces}
u_\eps \rightarrow u \quad \mbox{in} \quad  H^1(\partial R) \qquad \mbox{and} \qquad 	p_\eps \rightarrow p \quad \mbox{in} \quad L^2(\partial R) \quad\mbox{as}\quad \eps\rightarrow 0 ,
\end{equation}
and, using the Hölder inequality, we estimate the right hand side in \eqref{diffshape-lift} by
\begin{equation*}\begin{aligned}
\left|e_2\cdot\int_{\partial R}\mathbb{T}(u_\eps-u,p_\eps-p) n\right|
\leq C \left(\| u_\eps-u\|_{H^1(\partial R)} + \|p_\eps-p\|_{L^2(\partial R)}\right)
\end{aligned}
\end{equation*}
with $C>0$ independent of $\eps$. Thanks to \eqref{conv-traces}, we finally conclude that \eqref{contlift} holds.\par	
If $B$ is not a $C^{1,1}$-domain, we use a density argument. From the previous analysis, we know that, given $\tau>0$, there exists $\delta>0$ such that, for
a domain $K_1\in\mathcal{C}_{\alpha,D}$ and a $C^{1,1}$-domain $K_2\in\mathcal{C}_{\alpha,D}$ with $d^H(K_1, K_2)<\delta$, we have
$$|\mathcal{L}_{K_1}(u_{K_1},p_{K_1})-\mathcal{L}_{K_2}(u_{K_2},p_{K_2})|< \frac{\tau}{2},$$
	where $(u_{K_i},p_{K_i})$ is the unique solution to \eqref{bound-value-eps}  in $\Omega_i = R\setminus K_{i}$ for $i=1,2$.
	Taking $\eps$ sufficiently small such that $d^H(B_\eps, B)< \delta,$ it is possible to construct a  $C^{1,1}$-domain $K_\eps \in \mathcal{C}_{\alpha,D}$ with  $d^H(B_\eps, K_\eps)<\delta$ and $d^H(K_\eps, B)<\delta$. It follows that
\begin{equation*}\begin{aligned}
&|\mathcal{L}_{B_\eps}(u_\eps, p_\eps)-\mathcal{L}_B(u,p) | \\
&\leq |\mathcal{L}_{B_\eps}(u_\eps, p_\eps)-\mathcal{L}_{K_\eps}(u_{K_\eps},p_{K_\eps}) | + |\mathcal{L}_{K_\eps}(u_{K_\eps},p_{K_\eps}) - \mathcal{L}_{B}(u, p) |< \frac{\tau}{2} +\frac{\tau}{2} = \tau
	\end{aligned}\end{equation*}
and \eqref{contlift} holds also if $B$ is not $C^{1,1}$.\end{proof}

In the next two sections, we present two different applications of the continuous dependence results proved in this section. The first can be used to study stability of artificial configurations in wind tunnels, the second has a direct application in the stability analysis of real-life configurations.

\section{Asymmetry in wind tunnel experiments}\label{sec-windtunnel}

In this section we introduce the FSI problem obtained by coupling the boundary-value problem \eqref{bv-pb} with the zero-lift condition, that is,
\begin{equation}\label{FSI}
\begin{aligned}
&- \Delta u + u\cdot \nabla u + \nabla p =0, \quad \nabla\cdot u=0 \quad \mbox{in} \quad \Omega,\\
{u}_{|_{\partial B}}={u}_{|_{\Gamma_b}}&=0, \quad {u}_{|_{\Gamma_t}}=\lambda U e_1, \quad {u}_{|_{\Gamma_l}}=\lambda V_{\rm in}e_1,\quad {u}_{|_{\Gamma_r}}=\lambda V_{\rm out}e_1,\\
&\qquad\qquad \qquad \quad \mathcal{L}_B(u,p)=0,\end{aligned}\tag{FSI}
\end{equation}with $\mathcal{L}_B(u,p)$ as in \eqref{lift-weak}. We know from \cite{GazSpe20,BocGaz23} that, when the body shape is symmetric (with respect to the line $x_2=0$) and the flow shape is even, \eqref{FSI} admits a unique solution for any $\lambda\in [0, \Lambda]$. However, for asymmetric flow shapes, the unique solution provided by Proposition \ref{exi-uni-bv} may create a nonzero-lift on bodies with either symmetric or asymmetric shapes.
Our goal is to show the existence of asymmetric body shapes and asymmetric flow shapes such that \eqref{FSI} admits a unique (zero-lift) solution.
Although a ``global'' zero-lift result (valid {\em for any} $\lambda\in[0,\Lambda]$) appears out of reach, we prove its ``local'' version, with {\em fixed} flow magnitude $\lambda\in[0,\Lambda]$. The flow magnitude is
usually fixed in wind tunnel experiments, where engineers create ad-hoc flows that, typically, have top and bottom zero boundary conditions. Thus, throughout this section we merely consider (FSI) with $U=0$ and we prove
\vspace{1em}
\begin{theorem}\label{theo-zerolift-loc}
Let $0<\alpha<|D|$, $r>0$, $U=0$, and let $\Lambda$ be as in Proposition \ref{exi-uni-bv}. For any $\lambda \in[0,\Lambda]$ there exist an asymmetric body shape $\widetilde{B}\in \mathcal{C}_{\alpha, D}$ and/or a non-even flow shape $(\widetilde{V}_{\rm in}, \widetilde{V}_{\rm out})\in \mathcal{F}_{r,0}$, such that \eqref{FSI} admits a unique solution in $\widetilde{\Omega}=R\setminus \widetilde{B}$ with flow $\lambda(\widetilde{V}_{\rm in}, \widetilde{V}_{\rm out})$.
\end{theorem}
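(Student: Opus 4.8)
The plan is to obtain the zero-lift configuration from an intermediate-value argument applied to a two-parameter family of admissible configurations carrying a mirror symmetry about the line $\{x_2=0\}$. First I would set up the family. Fix an even flow shape $V^0$ of Poiseuille type, e.g. $V^0(x_2)=\tfrac{3}{4H}(1-x_2^2/H^2)$, with $\|V^0\|_{W^{1,\infty}(-H,H)}<r/2$ (possible for $r$ large enough; when $r$ is so small that $\mathcal{F}_{r,0}$ leaves no room to asymmetrize the flow, one deforms only the body, via a rotation together with an area-preserving shear, and the argument below is unchanged), together with a nonzero odd function $\phi\in W^{1,\infty}(-H,H)$ with $\phi(\pm H)=0$, e.g. $\phi(x_2)=x_2(H^2-x_2^2)$. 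Set $V^t:=V^0+t\phi$; then $(V^t,V^t)\in\mathcal{F}_{r,0}$ for $|t|$ small, and $V^t$ is non-even precisely when $t\neq0$. Fix also an axis-aligned rectangle $B_0\subset D$ with $|B_0|=\alpha$ (symmetric about $\{x_2=0\}$) and let $B_s$ be the rigid rotation of $B_0$ about the origin by the angle $s\theta_0$, with $\theta_0\in(0,\pi/2)$ small enough that $B_s\in\mathcal{C}_{\alpha,D}$ for all $|s|\le1$; since $0<\alpha<|D|$ this is possible, and $B_s$ is asymmetric about $\{x_2=0\}$ precisely when $s\neq0$. For $(s,t)$ in a small closed disk $\overline{\mathcal{D}_\rho}\subset\mathbb{R}^2$ and for the given $\lambda\in[0,\Lambda]$ (recall $\Lambda=\Lambda(r,D)$ is common to all these configurations), let $(u_{s,t},p_{s,t})$ be the unique solution of \eqref{bv-pb} with body $B_s$ and flow $\lambda(V^t,V^t)$ provided by Proposition~\ref{exi-uni-bv}, and set $\ell(s,t):=\mathcal{L}_{B_s}(u_{s,t},p_{s,t})$.

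Next I would show that $\ell$ is continuous on $\overline{\mathcal{D}_\rho}$ and that $\ell(-s,-t)=-\ell(s,t)$. Continuity follows from Theorems~\ref{theo-cont-bodies} and \ref{theo-cont-flows}: $s\mapsto B_s$ is Hausdorff-continuous and $t\mapsto(V^t,V^t)$ is continuous in $W^{1,\infty}$, hence weak-$\ast$; splitting $\ell(s,t)-\ell(s_0,t_0)=[\ell(s,t)-\ell(s,t_0)]+[\ell(s,t_0)-\ell(s_0,t_0)]$, the second bracket tends to $0$ by Theorem~\ref{theo-cont-bodies}, while the first is controlled by Theorem~\ref{theo-cont-flows} \emph{uniformly in $s$}, since the constant there depends only on $r$ and $D$, not on the body. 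For the symmetry, let $\mathcal{R}(x_1,x_2)=(x_1,-x_2)$; since $R$ is symmetric and $U=0$, $\mathcal{R}$ carries \eqref{bv-pb} with data $(B_s,V^t)$ to the same problem with data $(\mathcal{R}B_s,V^t(-\cdot))=(B_{-s},V^{-t})$, using $\mathcal{R}B_0=B_0$, $V^0$ even and $\phi$ odd. Moreover, if $(u,p)$ solves the former, then $\widetilde u:=(u_1\circ\mathcal{R},\,-\,u_2\circ\mathcal{R})$ and $\widetilde p:=p\circ\mathcal{R}$ solve the latter, and the transversal fluid force is odd under this reflection: $\mathcal{L}_{\mathcal{R}B_s}(\widetilde u,\widetilde p)=-\mathcal{L}_{B_s}(u,p)$. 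By the uniqueness in Proposition~\ref{exi-uni-bv}, $(u_{-s,-t},p_{-s,-t})=(\widetilde{u_{s,t}},\widetilde{p_{s,t}})$, and hence $\ell(-s,-t)=-\ell(s,t)$ for all $(s,t)\in\overline{\mathcal{D}_\rho}$.

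The conclusion is then immediate. The map $\theta\mapsto g(\theta):=\ell(\rho\cos\theta,\rho\sin\theta)$ is continuous and satisfies $g(\theta+\pi)=-g(\theta)$, so $g(0)=-g(\pi)$ and the intermediate value theorem gives $\theta_\ast\in[0,\pi]$ with $g(\theta_\ast)=0$. Setting $(s_\ast,t_\ast):=(\rho\cos\theta_\ast,\rho\sin\theta_\ast)\neq(0,0)$, $\widetilde B:=B_{s_\ast}\in\mathcal{C}_{\alpha,D}$ and $(\widetilde V_{\rm in},\widetilde V_{\rm out}):=(V^{t_\ast},V^{t_\ast})\in\mathcal{F}_{r,0}$, the pair $(u_{s_\ast,t_\ast},p_{s_\ast,t_\ast})$ solves \eqref{bv-pb} with $\mathcal{L}_{\widetilde B}(u_{s_\ast,t_\ast},p_{s_\ast,t_\ast})=0$, i.e. it solves \eqref{FSI}, and it is the unique solution by Proposition~\ref{exi-uni-bv}. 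Since $(s_\ast,t_\ast)\neq(0,0)$, either $s_\ast\neq0$, so $\widetilde B$ is asymmetric, or $t_\ast\neq0$, so $(\widetilde V_{\rm in},\widetilde V_{\rm out})$ is non-even (or both), which is exactly the asserted ``and/or''.

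I expect the only delicate point to be the reflection identity $\mathcal{L}_{\mathcal{R}B}(\widetilde u,\widetilde p)=-\mathcal{L}_{B}(u,p)$, i.e. that the transversal force changes sign under the mirror symmetry. This is the very mechanism behind the vanishing of the lift for symmetric bodies and even flows in \cite{GazSpe20,BocGaz23}; here it is applied to the \emph{asymmetric} configurations $(B_s,V^t)$, and when $B$ is merely Lipschitz it must be extracted from the weak definition \eqref{lift-weak} of the lift — using that the duality pairing on $\partial B$ is invariant under $\mathcal{R}$ and that $\mathbb{T}(\widetilde u,\widetilde p)$ is the transform of $\mathbb{T}(u,p)$ — rather than from the boundary integral \eqref{lift-strong}. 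A secondary point is the joint continuity of $\ell$, which is slightly more than the separate statements of Theorems~\ref{theo-cont-flows}–\ref{theo-cont-bodies} but follows from them once one observes that the flow-continuity estimate is uniform over $\mathcal{C}_{\alpha,D}$. The admissibility of the two families, the uniform bounds and uniqueness, and the final intermediate-value step are routine.
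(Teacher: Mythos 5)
Your proof is correct and rests on the same two pillars as the paper's: the antisymmetry of the lift under the reflection $\mathcal{R}(x_1,x_2)=(x_1,-x_2)$ (which the paper records as \eqref{lift-neg}) and an intermediate-value argument legitimized by the continuity results of Theorems \ref{theo-cont-flows} and \ref{theo-cont-bodies}. The implementation, however, is genuinely different. The paper starts from an arbitrary asymmetric body and non-even flow and spends most of its proof building explicit paths to the mirror configuration that \emph{avoid} symmetric bodies and even flows throughout (the trapezium--pentagon--hexagon deformation of Figure \ref{body-eps} and the surgery on the odd parts of the flow shapes in Figures \ref{odd-part-three}--\ref{odd-part-one}); Bolzano then produces a zero somewhere along the path, and the non-symmetry of the zero-lift configuration is inherited from the path. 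You instead center a two-parameter family at the symmetric configuration $(s,t)=(0,0)$, observe that the antipodal map in parameter space realizes the reflection, and apply the intermediate value theorem on a circle around the origin: the zero lands at $(s_*,t_*)\neq(0,0)$ and is therefore automatically asymmetric in at least one ingredient. This eliminates all of the ad hoc path constructions. The trade-off is that the paper's version delivers a configuration in which the body is asymmetric \emph{and} the flow is non-even, whereas yours guarantees only one of the two; since the statement reads ``and/or'', both suffice. Two caveats. First, your parenthetical fallback for small $r$ must indeed remain a \emph{two}-parameter deformation of the body (rotation plus shear, as you say), on which the circle argument is run: a one-parameter family $B_s$ with a fixed even flow has $\ell(0)=0$ trivially (symmetric body, even flow), so a plain interval-IVT could return exactly the excluded symmetric configuration. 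Second, the joint continuity of $\ell(s,t)$ does require the flow-continuity estimate to be uniform over the bodies $B_s$; you correctly point to the constant $C(r,D)$ in Theorem \ref{theo-cont-flows}, and the paper's own map $\Phi$ relies on the same joint continuity without further comment, so you are no less rigorous than the source on this point.
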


\begin{proof} Fix $\lambda\in[0,\Lambda]$.
Take any asymmetric $B\in\mathcal{C}_{\alpha, D}$ and any non-even flow shape $(V_{\rm in}, V_{\rm out})\in \mathcal{F}_{r,0}$. We know from Proposition \ref{exi-uni-bv} that
\eqref{bv-pb} admits a unique solution $(u,p)$ in $\Omega=R\setminus B$ with flow $\lambda(V_{\rm in}, V_{\rm out})$. Then we compute the associated lift $\mathcal{L}_B(u,p)$ in \eqref{lift-weak}.\par
%	When $B$ is asymmetric and $(V_{\rm in}, V_{\rm out})$  are non-even functions, two cases are possible:
	 If $\mathcal{L}_B(u,p)=0$, then the proof is completed by taking $\widetilde{B}=B$ and $(\widetilde{V}_{\rm in}, \widetilde{V}_{\rm out})=(V_{\rm in}, V_{\rm out})$. If $\mathcal{L}_B(u,p)\neq 0$, say $\mathcal{L}_B(u,p)> 0$,  we consider the reflection of $B$ with respect to the $x_1$-axis, that is,
	\begin{equation*}B_{\rm ref}=\{(x_1,-x_2)\in R;\ (x_1,x_2)\in B\}\end{equation*}
	and the flow shape reflection $(V^{\rm ref}_{\rm in}, V^{\rm ref}_{\rm out})$ defined by
\begin{equation*}
(V^{\rm ref}_{\rm in}(x_2), V^{\rm ref}_{\rm out}(x_2))=(V_{\rm in}(-x_2), V_{\rm out}(-x_2))\quad  \mbox{for all} \quad x_2\in [-H,H].
\end{equation*}
By exploiting the symmetries, see \cite{GazSpe20}, problem \eqref{bv-pb} in $\Omega_{\rm ref}=R\setminus B_{\rm ref}$ with flow
$\lambda(V^{\rm ref}_{\rm in}, V^{\rm ref}_{\rm out})$ admits a unique solution $(u_{\rm ref},p_{\rm ref})$ and
\begin{equation}\label{lift-neg}
\mathcal{L}_{B_{\rm ref}}(u_{\rm ref},p_{\rm ref})=	-\mathcal{L}_{B}(u,p)<0.
\end{equation}

The strategy of the proof consists in constructing a sequence of domains that goes from $B$ to  $B_{\rm ref}$ avoiding symmetric domains and a sequence of flow shapes that goes from $(V_{\rm in}, V_{\rm out}) $ to $(V^{\rm ref}_{\rm in}, V^{\rm ref}_{\rm out})$ avoiding even functions.\par
	We first construct a continuous family of domains $\{B_\eps\}_{\eps\in[0,1]}$ such that\par\vspace{1em}
(i) $B_0=B$ and $B_1=B_{\rm ref}$,\hfill(ii) $B_\eps$ is asymmetric for any $\eps\in [0,1]$,\par
(iii) $d^H(B_{\eps}, B_{\overline{\eps}})\rightarrow 0$ as $\eps\rightarrow \overline{\eps}$ for any  $\overline{\eps} \in[0,1]$,\hfill
(iv) $B_\eps \in \mathcal{C}_{\alpha,D}$ for any $\eps\in[0,1]$.\par
\vspace{1em}
We present the details of the construction for a particular body shape $B$, but the method can be easily adapted to other shapes. Given $0<h<l<L$ and $h<H$, let $B$ be the right trapezium being the union of the closed rectangle
	$r=[-l,l]\times[-h,h]$ and the (closed) right triangle $T_0$ having vertices in $(l,-h)$, $(l,h)$, $(l+\gamma,h)$ for some $0<\gamma<l$.
All these parameters are chosen in such a way that $B\in \mathcal{C}_{\alpha,D}$ and the below construction maintains the inclusion
$B_\eps \in \mathcal{C}_{\alpha,D}$. For any $\eps\in\left[0,\tfrac{2}{3}\right]$ we modify $T_0$ with the right trapezium $T_\eps$ having vertices in
		$$
		(l,-h),\quad(l,h),\quad\left(l+\tfrac{\gamma}{1+\eps},h\right),\quad\left(l+\tfrac{\gamma}{1+\eps},h(1-2\eps)\right)
		$$
		so that $T_\eps$ degenerates into the original triangle for $\eps=0$, the area is maintained, \textit{i.e} $|T_\eps|=|T_0|=h\gamma$ for all $\eps\in\left[0,\tfrac{2}{3}\right]$,
		and $B_\eps=r\cup T_\eps$ is a convex pentagon with $|B_\eps|=\alpha$. In particular, $B_{2/3}$ has vertices in
		$$
		(-l,-h),\quad(-l,h),\quad\left(l+\tfrac{3\gamma}{5},h\right),\quad\left(l+\tfrac{3\gamma}{5},-\tfrac{h}{3}\right),\quad(l,-h).
		$$
		Then, for any $\eps\in\left[\tfrac{2}{3},1\right]$ we modify $T_{2/3}$ through a family of quadrilaterals $Q_\eps$ having vertices in
		$$
		(l,-h),\quad(l,h),\quad\left(l+\frac{9\gamma(1-\eps)}{15\eps-9\eps^2 -1},h\right),\quad\left(l+\frac{\gamma(6\eps-1)}{15\eps-9\eps^2 -1},(1-2\eps)h\right).
		$$
		Notice that $15\eps-9\eps^2 -1>0$ and $|Q_\eps|=|T_{2/3}|=h\gamma$ for all $\eps\in\left[\tfrac{2}{3},1\right]$. Thus, $B_\eps = r \cup Q_\eps$ is a convex hexagon with $|B_\eps|=\alpha$.
	At the end, $B_1=B_{\rm ref}$ is the symmetric of the original trapezium $B_0=B$. See Figure \ref{body-eps}
	for a qualitative construction, for the four values $\delta\in\{0 ,\tfrac23 ,\tfrac34 ,1\}$.
We see that $B_0=B$ (left) ``Hausdorff-continuously'' transforms into $B_1=B_{\rm ref}$ (right) maintaining the asymmetric property for each $B_\eps$. It is straightforward to check that the properties (i)-(ii)-(iii)-(iv) are satisfied by this particular family
	of domains.
\begin{figure}[h]
\includegraphics[scale=0.8]{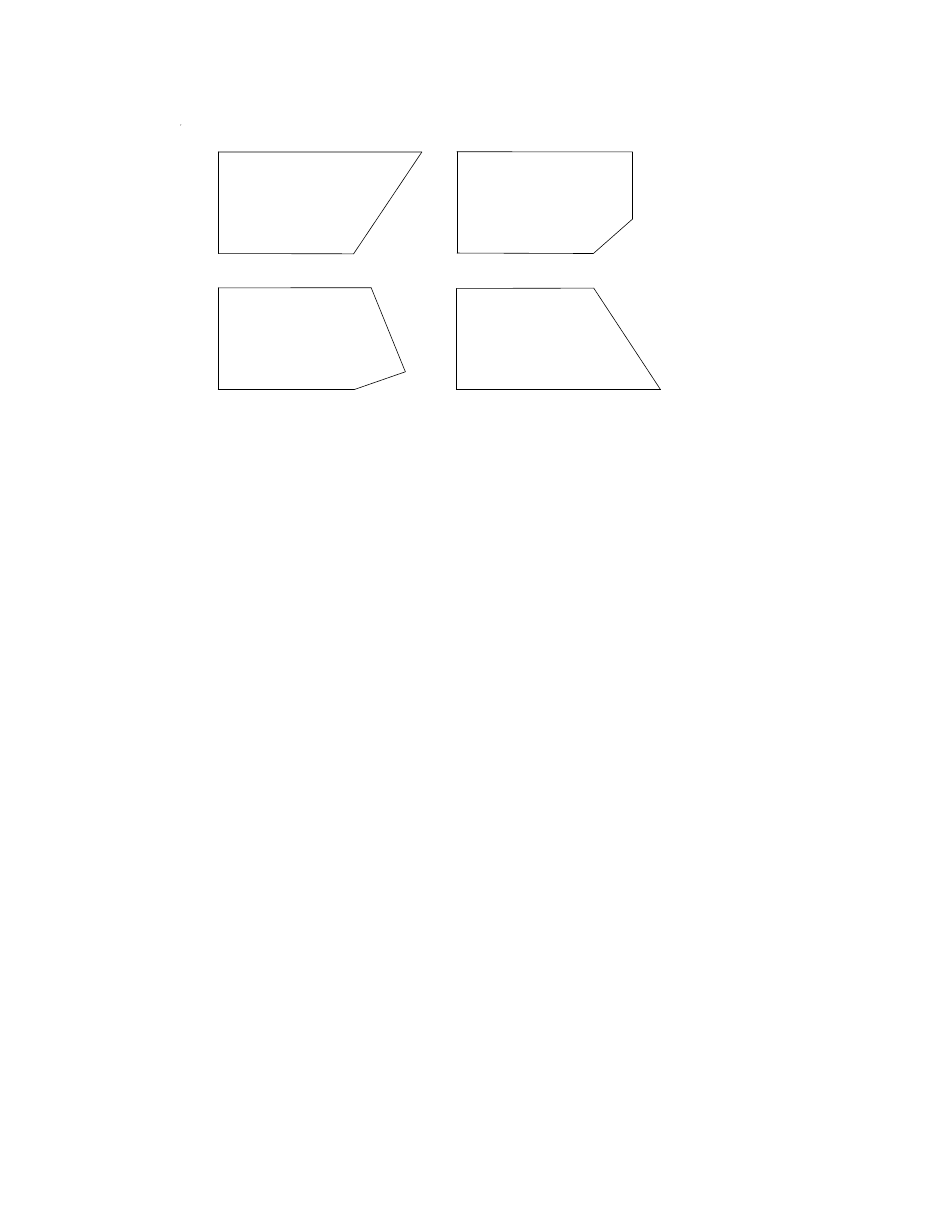}\includegraphics[scale=0.8]{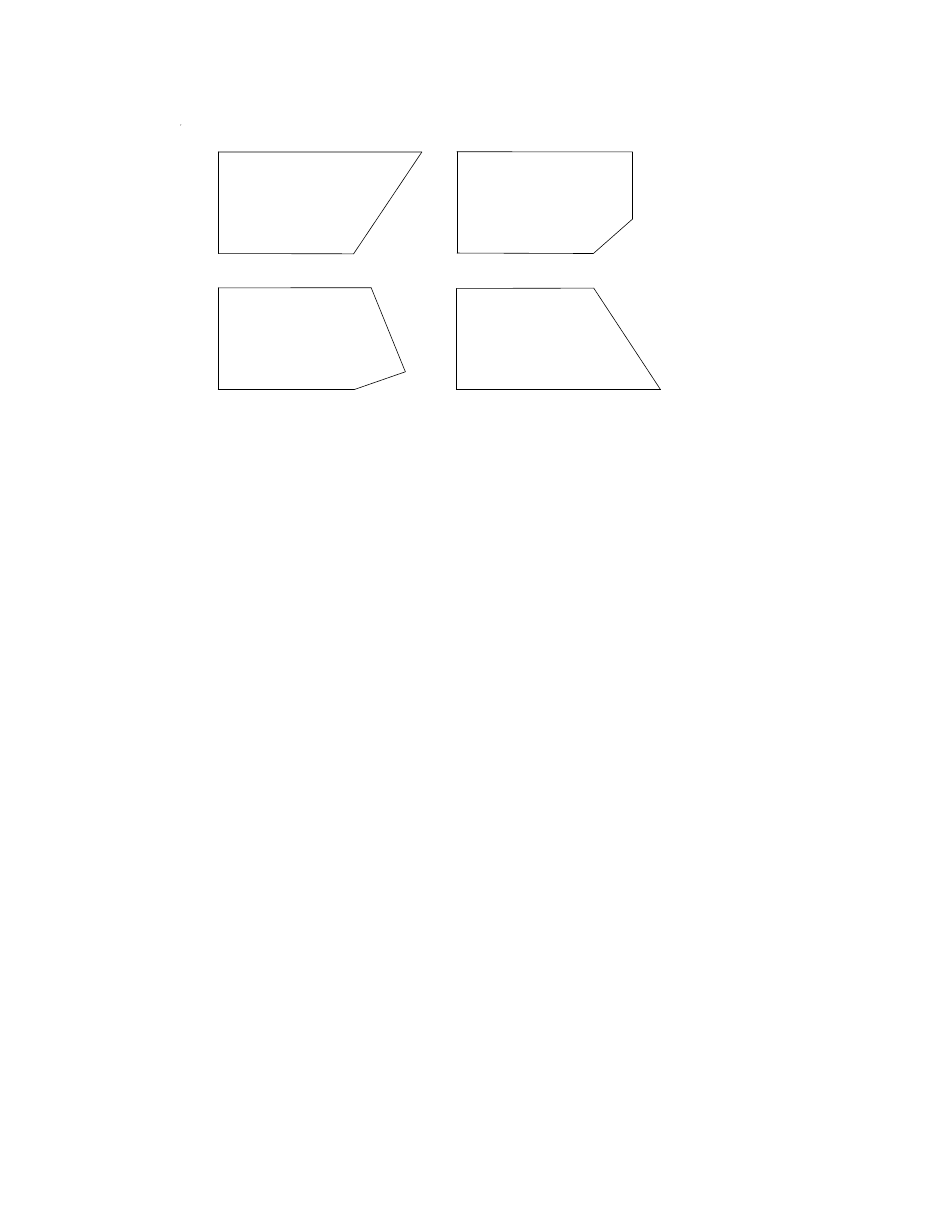}\vspace{0.5em}
\caption{The family of body shapes $\{B_\eps\}_{\eps\in[0,1]}$.}\label{body-eps}
\end{figure}

	 We have so constructed a path-connected family  of asymmetric body shapes $\{B_\eps\}_{\eps\in[0,1]}\subset \mathcal{C}_{\alpha,D}$ that starts in $B$ and ends in $B_{\rm ref}$.\par
We now construct a family of flow shapes $\{(V^\delta_{\rm in}, V^\delta_{\rm out})\}_{\delta\in[0,1]}$ such that
\vspace{1em}
\begin{itemize}
\item[(a)] $(V^0_{\rm in},V^0_{\rm out})=(V_{\rm in},V_{\rm out})$ and $(V^1_{\rm in},V^1_{\rm out})=(V^{\rm ref}_{\rm in},V^{\rm ref}_{\rm out})$,\par
\item[(b)] $ V^\delta_{\rm in}$ and $V^\delta_{\rm out}$ are non-even functions for any $\delta\in [0,1]$,\par
\item[(c)] $(V^{\delta}_{\rm in}, V^{\delta}_{\rm out}) \stackrel{\ast}{\rightharpoonup}(V^{\delta_1}_{\rm in}, V^{\delta_1}_{\rm out})$ in $W^{1,\infty}(-H,H)^2$ as $\delta\rightarrow\overline{\delta}$, for any $\overline{\delta}\in[0, 1]$,\par
\item[(d)] $(V^{\delta}_{\rm in}, V^{\delta}_{\rm out})\in\mathcal{F}_{r,0}$ for any $\delta\in [0,1]$.
\end{itemize}
\vspace{1em}
To this end, we write the flow shapes as the sum of their even and odd parts, namely,
$$
V_{\rm in, e}(x_2)=\frac{V_{\rm in, e}(x_2)+V_{\rm in, e}(-x_2)}{2},\quad V_{\rm in, o}(x_2)=\frac{V_{\rm in, o}(x_2)-V_{\rm in, o}(-x_2)}{2},\quad\forall|x_2|\le H,
$$
and, similarly, for $V_{\rm out}$. Hence, $V_{\rm in}=V_{\rm in, e} + V_{\rm in, o}$  and $V_{\rm out}=V_{\rm out, e} + V_{\rm out, o}$. We modify only the odd parts $V_{\rm in,o}$ and $V_{\rm out,o}$ in order to
reach $(-V_{\rm in, o}, -V_{\rm out, o})$, avoiding even functions. Therefore, at the end of the procedure, we have $V_{\rm in, e}-V_{\rm in, o}=V^{\rm ref}_{\rm in}$ and
$V_{\rm out, e}-V_{\rm out, o}=V^{\rm ref}_{\rm out}$.\par
Since $(V_{\rm in}, V_{\rm out})\in \mathcal{F}_{r,0}$, the odd parts vanish at $x_2=\pm H$. Focusing only on $V_{\rm in}$, two cases may occur:
either it has a unique zero in $(-H,H)$ at $x_2=0$ or it has (at least) three zeros, one at $x_2=0$ and two symmetric ones at $x_2=\pm x^*$,
with $x^*\in(0,H)$.
	In the case of (at least) three zeros, we define the modified odd part $V^\delta_{\rm in, o}$ by
	\begin{equation*}
	V^\delta_{\rm in, o}(x_2)=\begin{cases}
	(1-4\delta)V_{\rm in, o}(x_2) \qquad \qquad& \mbox{for} \ x^*\leq |x_2|\leq H,\\
	V_{\rm in, o}(x_2) \qquad \qquad & \mbox{for} \  |x_2|<  x^*
	\end{cases}\qquad\mbox{ for $\delta\in [0, \tfrac{1}{2}]$},
	\end{equation*}
	\begin{equation*}
	V^\delta_{\rm in, o}(x_2)=\begin{cases}
	-V_{\rm in, o}(x_2) \qquad \qquad& \mbox{for} \ x^*\leq |x_2|\leq H,\\
	(3-4\delta)V_{\rm in, o}(x_2) \qquad \qquad & \mbox{for} \  |x_2|<  x^*
	\end{cases}\qquad\mbox{ for $\delta\in (\tfrac{1}{2},1]$}.\vspace{1em}
\end{equation*}
See Figure \ref{odd-part-three} for a qualitative construction in the case $V_{\rm in, o}(x_2)=\sin\left(2\pi x_2\right)$ with $H=1$,
for the five values $\delta\in\{0,\tfrac14 ,\tfrac12 ,\tfrac34 ,1\}$.
\begin{figure}
		\centering
		\begin{subfigure}{0.3\textwidth}
			\includegraphics[width=\textwidth]{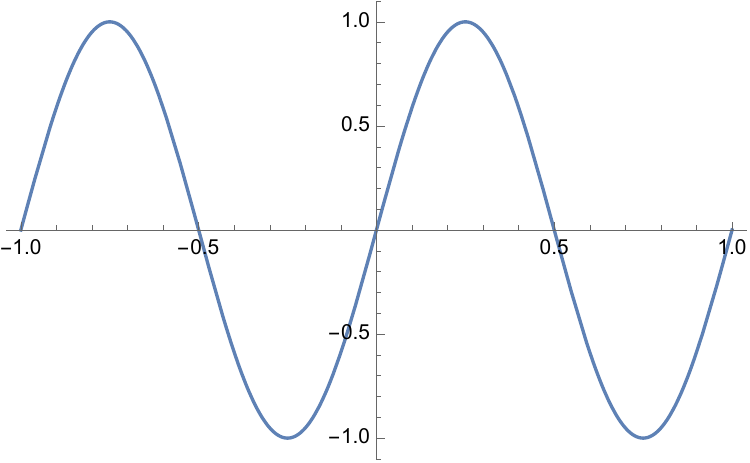}
		\end{subfigure}\hfill
		\begin{subfigure}{0.3\textwidth}
			\includegraphics[width=\textwidth]{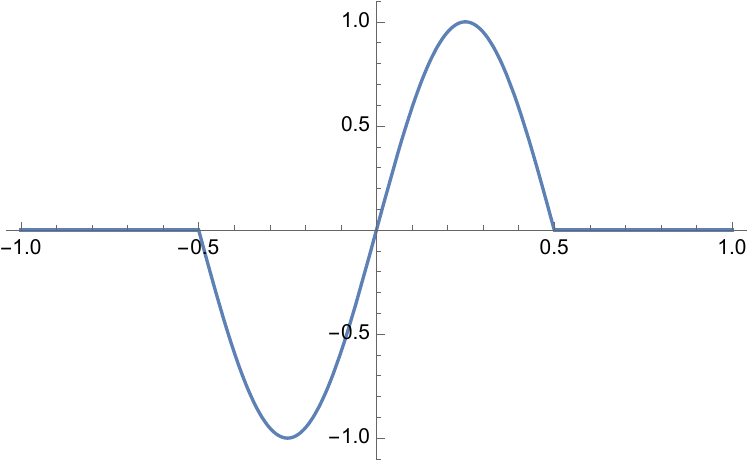}
		\end{subfigure}\vspace{1em}\hfill
		\begin{subfigure}{0.3\textwidth}
			\includegraphics[width=\textwidth]{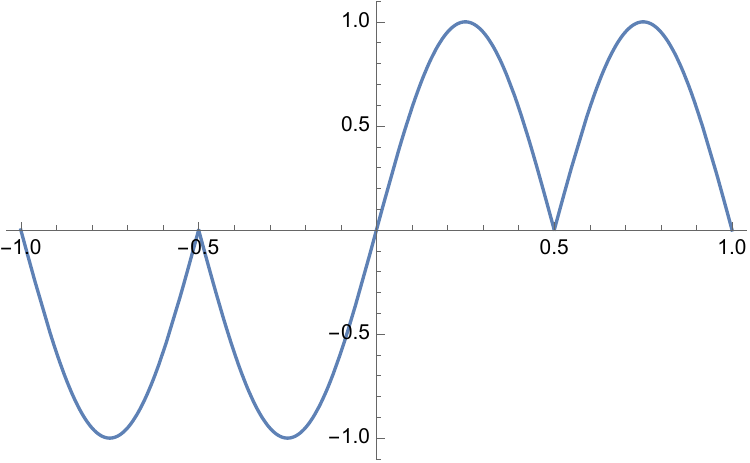}
		\end{subfigure}\hfill
		\begin{subfigure}{0.3\textwidth}
			\includegraphics[width=\textwidth]{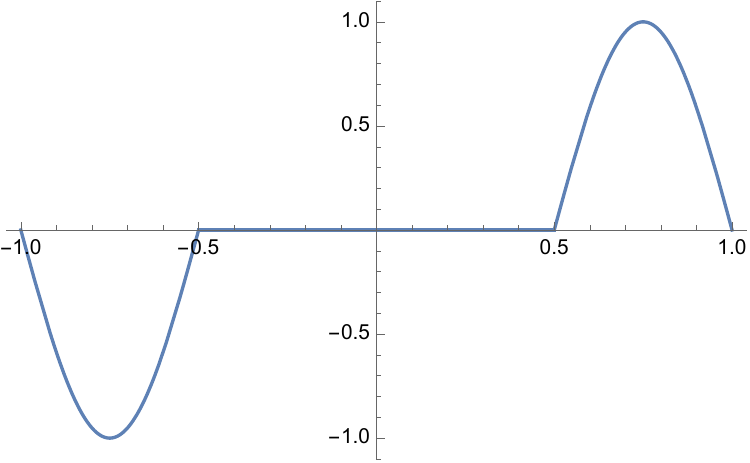}
		\end{subfigure}\qquad
\begin{subfigure}{0.3\textwidth}
	\includegraphics[width=\textwidth]{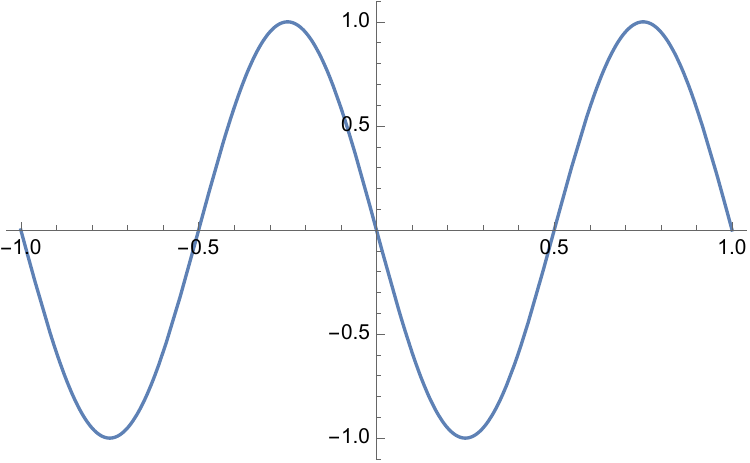}
\end{subfigure}\vspace{1em}
		\caption{The family of the flow-shape odd parts $\{V_{\rm in, o}^\delta\}_{\delta\in [0,1]}$ in the case of three zeros in $(-H,H)$.}
		\label{odd-part-three}
	\end{figure}
We  see that $V^0_{\rm in, o}= V_{\rm in, o}$ (above, left) ``$W^{1,\infty}$-continuously transforms'' into $V^1_{\rm in, o}=V^{\rm ref}_{\rm in, o}$ (below, right) maintaining the non-even property for each $V_{\rm in, o}^\delta$. In fact, with this procedure
$V^\delta_{\rm in, e}=V_{\rm in, e}$ and $V^\delta_{\rm in, o}$ is an odd function for any $\delta$. A similar construction can be repeated for $V^\delta_{\rm out}$
and the properties (a)-(b)-(c) are satisfied by this particular family of flow shapes.\par
When the odd part has a unique zero in $(-H,H)$, the strategy consists in creating two additional zeros at some $x_2=\pm x^*$, other than $x_2=0$,
and then use the symmetry-breaking argument as in the previous case. For the sake of simplicity, we omit the details of the construction and we refer to Figure \ref{odd-part-one} for a qualitative description in the case $V_{\rm in, o}(x_2)=\sin\left(\pi x_2\right)$ when $H=1$.
In order to transform $V^0_{\rm in, o}=V_{\rm in, o}$ (above, left) into $V^1_{\rm in, o}=V^{\rm ref}_{\rm in, o}$ (below, right), we
maintain the non even-property (in fact, odd) for each $V^{\delta}_{\rm in, o}$.
	\begin{figure}
			\centering
		\begin{subfigure}{0.3\textwidth}
			\includegraphics[width=\textwidth]{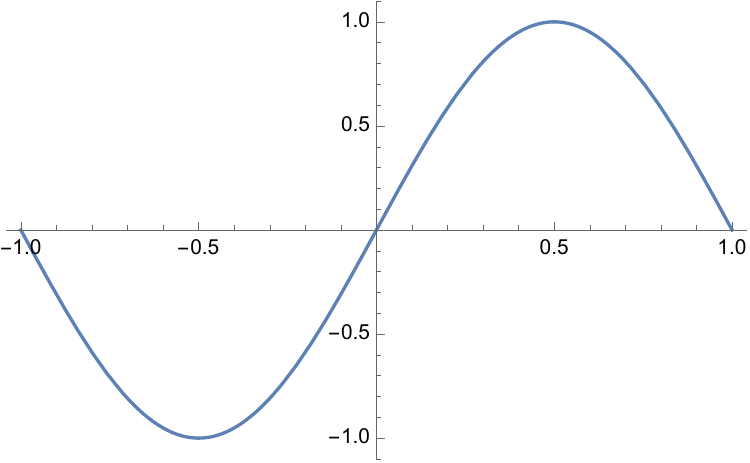}
		\end{subfigure}\hfill
		\begin{subfigure}{0.3\textwidth}
			\includegraphics[width=\textwidth]{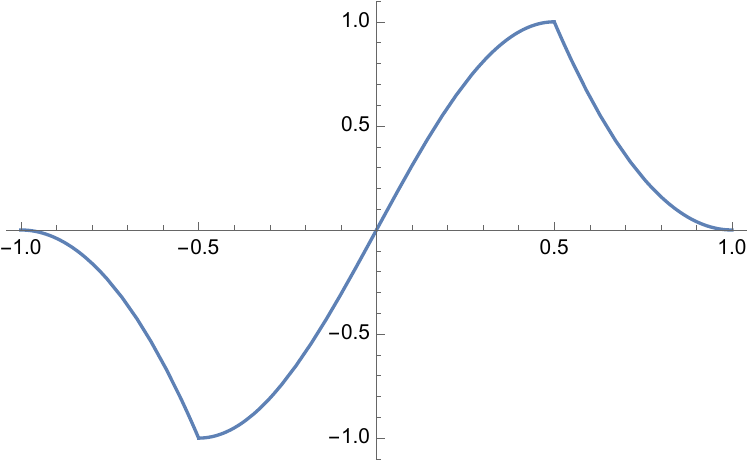}
		\end{subfigure}\vspace{1em}\hfill
		\begin{subfigure}{0.3\textwidth}
			\includegraphics[width=\textwidth]{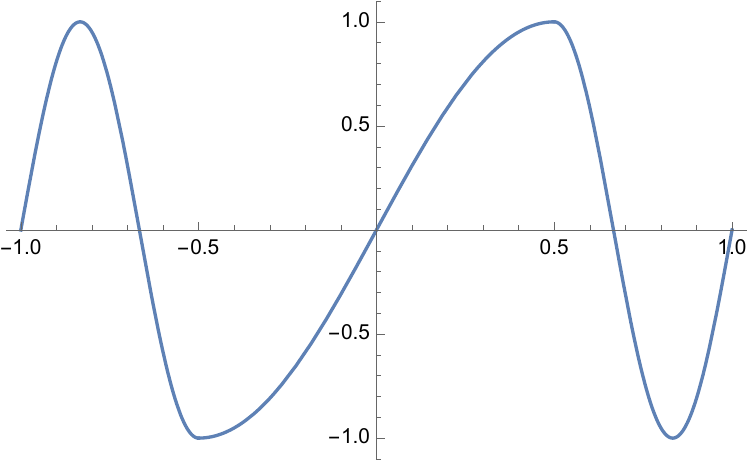}
		\end{subfigure}\\
		\begin{subfigure}{0.3\textwidth}
			\includegraphics[width=\textwidth]{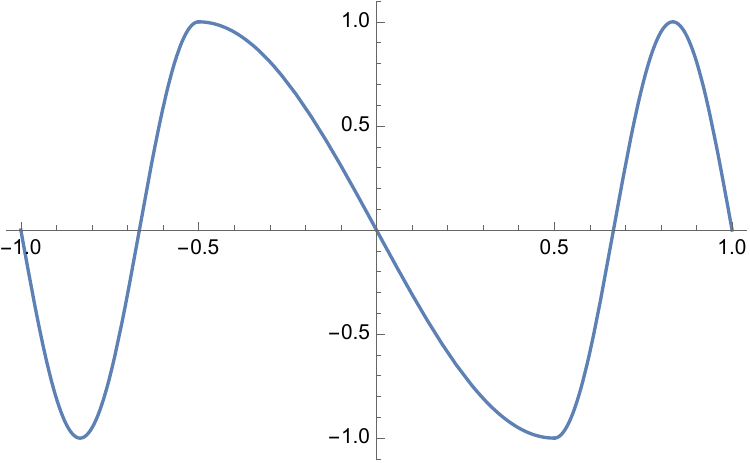}
		\end{subfigure}\hfill
		\begin{subfigure}{0.3\textwidth}
			\includegraphics[width=\textwidth]{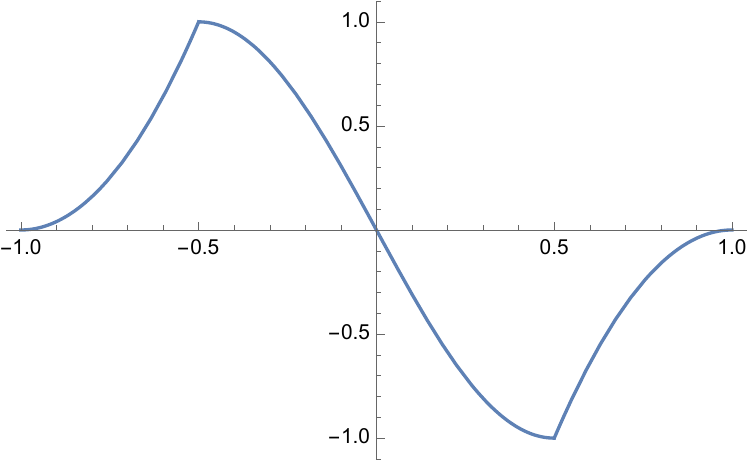}
		\end{subfigure}\hfill
		\begin{subfigure}{0.3\textwidth}
			\includegraphics[width=\textwidth]{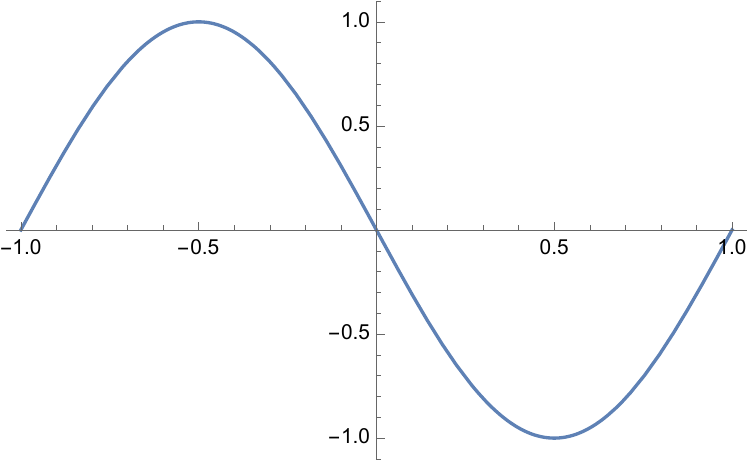}
		\end{subfigure}\vspace{1em}
		\caption{The family of flow-shape odd parts $\{V_{\rm in, o}^\delta\}_{\delta\in [0,1]}$ in the case of a unique zero in $(-H,H)$.}
		\label{odd-part-one}
	\end{figure}

We still need to ensure condition (d). To this end, we slightly modify the construction by setting
	   \begin{equation*}
        V^\delta_{\rm in}=\frac{(V_{\rm in,e} + V^\delta_{\rm in,o})\|V_{\rm in}\|_{W^{1,\infty}(-H,H)}}{\|V_{\rm in,e} + V^\delta_{\rm in,o}\|_{W^{1,\infty}(-H,H)}} \ \  \mbox{and} \ \ V^\delta_{\rm out}= \frac{(V_{\rm out,e} + V^\delta_{\rm out,o})\|V_{\rm out}\|_{W^{1,\infty}(-H,H)}}{\|V_{\rm out,e} + V^\delta_{\rm out,o}\|_{W^{1,\infty}(-H,H)}}
	   \end{equation*}
so that, for any $\delta\in[0,1]$,
$$\|V^\delta_{\rm in}\|_{W^{1,\infty}(-H,H)} + \|V^\delta_{\rm out}\|_{W^{1,\infty}(-H,H)}= \|V_{\rm in}\|_{W^{1,\infty}(-H,H)} + \|V_{\rm out}\|_{W^{1,\infty}(-H,H)} \leq r .$$
The construction is now complete within the class $\mathcal{F}_{r,0}$.\par
It follows from Proposition \ref{exi-uni-bv} that for all $(\eps,\delta)\in[0,1]^2$ there exists a unique solution $(u_{\eps,\delta}, p_{\eps,\delta})$ to \eqref{bv-pb} in $\Omega_\eps=R\setminus B_\eps$ with flow $\lambda(V^\delta_{\rm in}, V^\delta_{\rm out})$ for $\lambda\in [0, \Lambda]$, and $\Lambda>0$ is independent of both $\eps$ and $\delta$. We define the map
	\begin{equation*}\begin{aligned}
	\Phi: \quad \{ B_\eps\}_{\eps\in[0,1]} \times \{(V^\delta_{\rm in}, V^\delta_{\rm out})\}_{\delta\in[0,1]}  \quad  &\rightarrow  \qquad  \mathbb{R}\\
	\left(B_\eps, V^\delta_{\rm in}, V^\delta_{\rm out}\right)\qquad\qquad\quad &\mapsto\quad\mathcal{L}_{B_\eps}(u_{\eps,\delta},p_{\eps,\delta}).
	\end{aligned}
	\end{equation*}
By \eqref{lift-neg}  and the continuous dependence results in Theorems \ref{theo-cont-flows} and \ref{theo-cont-bodies}, the generalized Bolzano Theorem yields the existence of $(\widetilde{\eps}, \widetilde{\delta})\in (0,1)\times (0,1)$ such that
	$\Phi(B_{\widetilde{\eps}}, V^{\widetilde{\delta}}_{\rm in}, V^{\widetilde{\delta}}_{\rm out})=0$.
Equivalently, denoting $\widetilde{B}=B_{\widetilde{\eps}}$ and $( \widetilde{V}_{\rm in}, \widetilde{V}_{\rm out})=(V^{\widetilde{\delta}}_{\rm in}, V^{\widetilde{\delta}}_{\rm out})$, there exists a unique solution $(u_{\widetilde{\eps},\widetilde{\delta} }, p_{\widetilde{\eps},\widetilde{\delta} })$ to \eqref{FSI} in $\widetilde{\Omega}=R\setminus \widetilde{B}$ with flow $\lambda( \widetilde{V}_{\rm in}, \widetilde{V}_{\rm out})$.\par
Using analogous arguments, the same result holds also
when $B$ is symmetric and $(V_{\rm in}, V_{\rm out})$ is non-even and when $B$ is asymmetric and $(V_{\rm in}, V_{\rm out})$
is even.\end{proof}

\section{A new measure for the stability of bridges}\label{sec-measure}

Theorem \ref{theo-zerolift-loc} states that, for a fixed flow magnitude $\lambda\in[0,\Lambda]$, in wind tunnel experiments one may create asymmetric flow
shapes and/or asymmetric body shapes in order to ``artificially'' obtain an equilibrium configuration in which no lift acts on the body. This local result is only partially satisfactory since the target is to find a global result, namely the construction of a body shape minimizing
the lift action in a given range of flow magnitudes. In this section we address this query.\par

Consider a fixed body shape $B\in\mathcal{C}_{\alpha,D}$. From \cite{BocGaz23} we know that the lift $\mathcal{L}_B(u,p)$ in \eqref{lift-weak}
belongs to $C([0,\Lambda])$, with $\Lambda>0$ possibly smaller than the uniqueness threshold in Proposition \ref{exi-uni-bv}.
Fix $U\in\{0,1\}$ and consider the functional
\begin{equation}\label{mappa}
\begin{aligned}
\mathfrak{L}_B:\quad \quad \mathcal{F}_{r,U} \ \quad  &\quad \rightarrow \qquad \qquad  C([0,\Lambda])\\
(V_{\rm in}, V_{\rm out})  &\quad\mapsto\quad \mathfrak{L}_B[\lambda (V_{\rm in}, V_{\rm out})]= \mathcal{L}_B(u,p)
\end{aligned}
\end{equation}
which maps the flow shapes to the lift exerted on $B$, seen as a continuous function of $\lambda$.
We restrict \eqref{mappa} to flow shapes that belongs to $\mathcal{F}_{r,U}$, introduced in Definition \ref{FU}.
This set has an evident physical interpretation: if a bridge of cross-section $B$ is built in some region where the winds are controlled by
the parameter $r>0$ (not necessarily small), then $\mathcal{F}_{r,U}$ contains all the ``expected winds''.
\vspace{1em}
\begin{definition}\label{instabmeas}
Let $r>0$, $U\in\{0,1\}$ and let $D\subset R$, $0<\alpha<|D|$. We call {\bf instability measure} of the body shape $B\in\mathcal{C}_{\alpha,D}$ the positive number
\begin{equation}\label{gammaruB}
\gamma_{r,U}(B):=\sup\limits_{(V_{\rm in}, V_{\rm out})\in\mathcal{F}_{r,U}} \| \mathfrak{L}_B[ \ \cdot \ (V_{\rm in},  V_{\rm out})]\|_{C([0,\Lambda])}.
\end{equation}
\end{definition}

The main result of this paper is the existence of a body shape of given area minimizing the instability measure.
\vspace{1em}
\begin{theorem}\label{existmax}
Let $r>0$, $U\in\{0,1\}$ and let $D\subset R$, $0<\alpha<|D|$ and $\mathcal{C}_{\alpha, D}$ be as in Definition \ref{def-admbodies}.
There exists $B^* \in \mathcal{C}_{\alpha, D} $ such that
\begin{equation}\label{min-gamma}
\gamma_{r,U}(B^*)\leq \gamma_{r,U} (B) \qquad \mbox{for all} \ B\in \mathcal{C}_{\alpha, D}.
\end{equation}
\end{theorem}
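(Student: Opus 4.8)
The plan is to use the direct method in the calculus of variations: show that the instability measure $\gamma_{r,U}$ is a lower semi-continuous functional on the compact metric space $(\mathcal{C}_{\alpha,D}, d^H)$, and then invoke the fact that a lower semi-continuous function on a compact set attains its minimum. Compactness of $\mathcal{C}_{\alpha,D}$ for the Hausdorff metric is already available from Proposition \ref{compact-bodies}, so the entire content lies in establishing the lower semi-continuity of $B \mapsto \gamma_{r,U}(B)$ with respect to Hausdorff convergence. Concretely, I would fix a sequence $B_n \xrightarrow{H} B$ in $\mathcal{C}_{\alpha,D}$ and aim to prove $\gamma_{r,U}(B) \le \liminf_{n\to\infty} \gamma_{r,U}(B_n)$.

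\textbf{Key steps.} First, fix $\eta > 0$ and, using the definition of the supremum in \eqref{gammaruB}, choose a flow shape $(V_{\rm in}, V_{\rm out}) \in \mathcal{F}_{r,U}$ with $\|\mathfrak{L}_B[\,\cdot\,(V_{\rm in},V_{\rm out})]\|_{C([0,\Lambda])} \ge \gamma_{r,U}(B) - \eta$. Note that by Proposition \ref{exi-uni-bv} the uniqueness threshold $\Lambda = \Lambda(r,D)$ depends only on $r$ and $D$, hence is the same for all the bodies $B_n$ and for $B$; this uniformity is what makes the $C([0,\Lambda])$-norms comparable across the sequence. Second, for this \emph{fixed} flow shape, apply Theorem \ref{theo-cont-bodies}: since $B_n \xrightarrow{H} B$, the solutions $(u_n, p_n)$ to \eqref{bound-value-eps} in $\Omega_n = R \setminus B_n$ satisfy $\mathcal{L}_{B_n}(u_n, p_n) \to \mathcal{L}_B(u,p)$ for each fixed $\lambda \in [0,\Lambda]$. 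I would then need to upgrade this pointwise-in-$\lambda$ convergence to convergence in $C([0,\Lambda])$, i.e. uniform in $\lambda$; this follows from the uniform (in $\lambda$ and in $n$) bounds on $\|\nabla u_n\|_{L^2(\Omega_n)}$ and $\|p_n\|_{L^2(\Omega_n)}$ from Proposition \ref{exi-uni-bv} together with the Lipschitz dependence of the lift on $\lambda$ established in \cite[Proposition 5.1]{BocGaz23}, which gives an equicontinuity estimate uniform in $n$; Arzel\`a--Ascoli then promotes pointwise to uniform convergence. Third, from $\|\mathfrak{L}_{B_n}[\,\cdot\,(V_{\rm in},V_{\rm out})]\|_{C([0,\Lambda])} \to \|\mathfrak{L}_B[\,\cdot\,(V_{\rm in},V_{\rm out})]\|_{C([0,\Lambda])}$ and the trivial inequality $\gamma_{r,U}(B_n) \ge \|\mathfrak{L}_{B_n}[\,\cdot\,(V_{\rm in},V_{\rm out})]\|_{C([0,\Lambda])}$, pass to the $\liminf$ to get $\liminf_n \gamma_{r,U}(B_n) \ge \|\mathfrak{L}_B[\,\cdot\,(V_{\rm in},V_{\rm out})]\|_{C([0,\Lambda])} \ge \gamma_{r,U}(B) - \eta$. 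Since $\eta > 0$ is arbitrary, lower semi-continuity follows. Finally, combine lower semi-continuity with the compactness of $\mathcal{C}_{\alpha,D}$ (Proposition \ref{compact-bodies}) and the fact that $\gamma_{r,U}$ is bounded below by $0$: take a minimizing sequence $\{B_n\}$ with $\gamma_{r,U}(B_n) \to \inf_{\mathcal{C}_{\alpha,D}} \gamma_{r,U}$, extract a Hausdorff-convergent subsequence $B_{n_k} \xrightarrow{H} B^*$ with $B^* \in \mathcal{C}_{\alpha,D}$, and conclude $\gamma_{r,U}(B^*) \le \liminf_k \gamma_{r,U}(B_{n_k}) = \inf_{\mathcal{C}_{\alpha,D}} \gamma_{r,U}$, which forces equality and gives \eqref{min-gamma}.

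\textbf{Main obstacle.} The delicate point is the interaction between the supremum over the (non-compact-in-norm, only weak-$\ast$-compact) flow-shape set $\mathcal{F}_{r,U}$ in the definition of $\gamma_{r,U}$ and the limit in the body shape $B_n$: one cannot naively exchange $\sup_{(V_{\rm in},V_{\rm out})}$ with $\liminf_n$. The argument above sidesteps this by only needing the \emph{one-sided} inequality (lower semi-continuity), for which it suffices to test $\gamma_{r,U}(B_n)$ against a single near-optimal flow shape for the limit body $B$ — so only the continuity statement of Theorem \ref{theo-cont-bodies} (body perturbation, flow shape fixed) is invoked, not Theorem \ref{theo-cont-flows}. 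The remaining technical care is in justifying that the convergence in Theorem \ref{theo-cont-bodies} is uniform with respect to $\lambda \in [0,\Lambda]$ rather than merely pointwise; this requires tracking that all constants in the proof of Theorem \ref{theo-cont-bodies} (in particular $\sigma_\eps$, $\delta_\eps$, and the constant $C$ in \eqref{est-gradv}) are independent of $\lambda$, which they are, since they stem only from the Hausdorff convergence of the domains and the $\lambda$-uniform a priori bounds of Proposition \ref{exi-uni-bv}.
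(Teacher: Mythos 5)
Your proof is correct, but it follows a genuinely different route than the paper. The paper argues in two steps: first it shows that the supremum over flow shapes in \eqref{gammaruB} is actually \emph{attained}, by taking a maximizing sequence in $\mathcal{F}_{r,U}$, extracting a weak-$\ast$ limit via Banach--Alaoglu, using Proposition \ref{closed-flows} to keep the limit in $\mathcal{F}_{r,U}$, and passing to the limit in the lift via Theorem \ref{theo-cont-flows}; then it asserts that Theorem \ref{theo-cont-bodies} makes $\gamma_{r,U}(\cdot)$ a \emph{continuous} function on $\mathcal{C}_{\alpha,D}$ and concludes with Proposition \ref{compact-bodies} and the generalized Weierstrass theorem. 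You instead bypass both the attainment step and Theorem \ref{theo-cont-flows} entirely, proving only lower semi-continuity of $\gamma_{r,U}$ by testing $\gamma_{r,U}(B_n)$ against a single near-optimal flow shape for the limit body, and then running the direct method on the compact class $\mathcal{C}_{\alpha,D}$. This is sufficient for \eqref{min-gamma}, and it is arguably more careful on the delicate point you flag: a supremum of functions that are continuous in $B$ (one for each fixed flow shape) is automatically lower semi-continuous, whereas full continuity of $\gamma_{r,U}$ --- which the paper claims --- would require some uniformity in the flow shape that is not spelled out there; your one-sided argument needs no such uniformity. What the paper's longer route buys in exchange is the extra information that the instability measure is realized by a genuine worst-case flow shape $(\overline{V}_{\rm in},\overline{V}_{\rm out})$, which in particular gives finiteness of $\gamma_{r,U}(B)$ and is of independent interest. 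One small simplification available to you: the upgrade of Theorem \ref{theo-cont-bodies} from pointwise-in-$\lambda$ to $C([0,\Lambda])$ convergence (your Arzel\`a--Ascoli step, which is fine given the $n$-uniform Lipschitz-in-$\lambda$ bound) is not actually needed for lower semi-continuity: since $\lambda\mapsto\mathfrak{L}_B[\lambda(V_{\rm in},V_{\rm out})]$ is continuous on the compact interval $[0,\Lambda]$, its norm is attained at some $\lambda_0$, and it suffices to apply Theorem \ref{theo-cont-bodies} at that single $\lambda_0$, using $\gamma_{r,U}(B_n)\ge|\mathfrak{L}_{B_n}[\lambda_0(V_{\rm in},V_{\rm out})]|$.
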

\begin{proof} We first claim that the supremum in \eqref{gammaruB} is attained, namely that, for a given $B\in\mathcal{C}_{\alpha,D}$,
there exists $(\overline{V}_{\rm in},  \overline{V}_{\rm out})\in \mathcal{F}_{r,U}$ such that
\begin{equation}\label{supmax}
\| \mathfrak{L}_B[ \ \cdot \ (\overline{V}_{\rm in},  \overline{V}_{\rm out})]\|_{C([0,\Lambda])}=\max\limits_{(V_{\rm in}, V_{\rm out})\in\mathcal{F}_{r,U}} \| \mathfrak{L}_B[ \ \cdot \ (V_{\rm in},  V_{\rm out})]\|_{C([0,\Lambda])}.
\end{equation}
To this end, consider a maximizing sequence $\{(V^n_{\rm in},  V^n_{\rm out})\}_{n\in \mathbb{N}}\subset \mathcal{F}_{r,U}$, that is,
\begin{equation*}
\|\mathfrak{L}_B[ \ \cdot \ (V^n_{\rm in},  V^n_{\rm out})]\|_{C([0,\Lambda])}
\rightarrow \sup\limits_{(V_{\rm in}, V_{\rm out})\in\mathcal{F}_{r,U}} \| \mathfrak{L}_B[ \ \cdot \ (V_{\rm in},  V_{\rm out})]\|_{C([0,\Lambda])} \quad\mbox{as}\quad n\rightarrow \infty.
		\end{equation*}
Since $\{(V^n_{\rm in},  V^n_{\rm out})\}_{n\in \mathbb{N}}$ is uniformly bounded in $W^{1,\infty}(-H,H)^2$, from the Banach-Alaoglu Theorem
we know that, up to a subsequence, $(V^n_{\rm in},  V^n_{\rm out})$ weakly-$\ast$ converges to some $(\overline{V}_{\rm in},  \overline{V}_{\rm out})\in W^{1,\infty}(-H,H)^2$, which also belongs to $\mathcal{F}_{r,U}$ due to Proposition \ref{closed-flows}.
By Theorem \ref{theo-cont-flows} we then know that
\begin{equation*}
\|\mathfrak{L}_B[\ \cdot \ (V^n_{\rm in},  V^n_{\rm out})]\|_{C([0,\Lambda])}\rightarrow \| \mathfrak{L}_B[ \ \cdot \ (\overline{V}_{\rm in},  \overline{V}_{\rm out})]\|_{C([0,\Lambda])}   \quad\mbox{as}\quad n\rightarrow \infty,
\end{equation*}
and that \eqref{supmax} holds.\par
By Theorem \ref{theo-cont-bodies}, we have that the lift $\mathcal{L}_B(u,p)$ is a continuous function of $B$ in the class $\mathcal{C}_{\alpha,D}$, therefore $\gamma_{r,U}(\cdot)$ is a continuous function in $\mathcal{C}_{\alpha,D}$. By Proposition \ref{compact-bodies}, the generalized Weierstrass Theorem guarantees the existence of a body shape $B^*\in \mathcal{C}_{\alpha,D}$ that minimizes $\gamma_{r,U}$ in $\mathcal{C}_{\alpha,D}$, which proves \eqref{min-gamma}.\end{proof}

The instability measure $\gamma_{r,U}(B)$, defined in \eqref{gammaruB}, appears suitable to quantify the instability of $B$ since,
for a given range of flow magnitudes, it computes the maximum strength of the lift exerted on $B$. Theorem \ref{existmax} shows that,
within the class $\mathcal{C}_{\alpha, D}$, an optimal shape $B^*$ minimizing $\gamma_{r,U}(\cdot)$ exists. Correspondingly, this
shows that an optimal cross-section for the deck of the bridge exists (maximizing its stability) but leaves open how to find it or, at least, some qualitative features of it. In view of the results in \cite{Piro74}, a conjecture could be that the optimal profile has a front end shaped like a wedge of right angle, see
also \cite{montoya2018} for the engineering point of view. Since in real applications the wind flow is of Couette-type, that is, $U=1$, we do not expect a symmetric body shape minimizing $\gamma_{r,U}$.\par \vspace{3em}\noindent
{\bf Acknowledgements.} The authors were partially supported by the Gruppo Nazionale per l’Analisi Matematica, la Probabilità e le loro Applicazioni (GNAMPA) of the Istituto Nazionale di Alta Matematica (INdAM). The first author was supported by the EU research and innovation programme Horizon Europe through the Marie Sklodowska-Curie project THANAFSI (No. 101109475). This work is part of the PRIN project 2022 "Partial differential equations and related geometric-functional inequalities", financially supported by the EU, in the framework of the "Next Generation EU initiative".
\vspace{1em}
\section*{Declarations}

\begin{itemize}
\item Funding: This research received no specific grant from any funding agency in the public, commercial, or
not-for-profit sectors.
\item Conflict of interest: Not applicable
\item Ethics approval: This study has not been duplicate publication or submission elsewhere.
\item Consent to participate: Not applicable
\item Consent for publication: Not applicable
\item Availability of data and materials: Not applicable
\item Code availability: Not applicable
\item Authors' contributions: Not applicable
\end{itemize}

\bibliography{references}

%% BioMed_Central_Bib_Style_v1.01

\begin{thebibliography}{13}
% BibTex style file: bmc-mathphys.bst (version 2.1), 2014-07-24
\ifx \bisbn   \undefined \def \bisbn  #1{ISBN #1}\fi
\ifx \binits  \undefined \def \binits#1{#1}\fi
\ifx \bauthor  \undefined \def \bauthor#1{#1}\fi
\ifx \batitle  \undefined \def \batitle#1{#1}\fi
\ifx \bjtitle  \undefined \def \bjtitle#1{#1}\fi
\ifx \bvolume  \undefined \def \bvolume#1{\textbf{#1}}\fi
\ifx \byear  \undefined \def \byear#1{#1}\fi
\ifx \bissue  \undefined \def \bissue#1{#1}\fi
\ifx \bfpage  \undefined \def \bfpage#1{#1}\fi
\ifx \blpage  \undefined \def \blpage #1{#1}\fi
\ifx \burl  \undefined \def \burl#1{\textsf{#1}}\fi
\ifx \doiurl  \undefined \def \doiurl#1{\url{https://doi.org/#1}}\fi
\ifx \betal  \undefined \def \betal{\textit{et al.}}\fi
\ifx \binstitute  \undefined \def \binstitute#1{#1}\fi
\ifx \binstitutionaled  \undefined \def \binstitutionaled#1{#1}\fi
\ifx \bctitle  \undefined \def \bctitle#1{#1}\fi
\ifx \beditor  \undefined \def \beditor#1{#1}\fi
\ifx \bpublisher  \undefined \def \bpublisher#1{#1}\fi
\ifx \bbtitle  \undefined \def \bbtitle#1{#1}\fi
\ifx \bedition  \undefined \def \bedition#1{#1}\fi
\ifx \bseriesno  \undefined \def \bseriesno#1{#1}\fi
\ifx \blocation  \undefined \def \blocation#1{#1}\fi
\ifx \bsertitle  \undefined \def \bsertitle#1{#1}\fi
\ifx \bsnm \undefined \def \bsnm#1{#1}\fi
\ifx \bsuffix \undefined \def \bsuffix#1{#1}\fi
\ifx \bparticle \undefined \def \bparticle#1{#1}\fi
\ifx \barticle \undefined \def \barticle#1{#1}\fi
\bibcommenthead
\ifx \bconfdate \undefined \def \bconfdate #1{#1}\fi
\ifx \botherref \undefined \def \botherref #1{#1}\fi
\ifx \url \undefined \def \url#1{\textsf{#1}}\fi
\ifx \bchapter \undefined \def \bchapter#1{#1}\fi
\ifx \bbook \undefined \def \bbook#1{#1}\fi
\ifx \bcomment \undefined \def \bcomment#1{#1}\fi
\ifx \oauthor \undefined \def \oauthor#1{#1}\fi
\ifx \citeauthoryear \undefined \def \citeauthoryear#1{#1}\fi
\ifx \endbibitem  \undefined \def \endbibitem {}\fi
\ifx \bconflocation  \undefined \def \bconflocation#1{#1}\fi
\ifx \arxivurl  \undefined \def \arxivurl#1{\textsf{#1}}\fi
\csname PreBibitemsHook\endcsname

%%% 1
\bibitem[\protect\citeauthoryear{Païdoussis et~al.}{2010}]{PaPriDeLan10}
\begin{bbook}
\bauthor{\bsnm{Païdoussis}, \binits{M.P.}},
\bauthor{\bsnm{Price}, \binits{S.J.}},
\bauthor{\bsnm{Langre}, \binits{E.}}:
\bbtitle{Fluid-Structure Interactions: Cross-Flow-Induced Instabilities}.
\bpublisher{Cambridge University Press},
\blocation{Cambridge}
(\byear{2010})
\end{bbook}
\endbibitem

%%% 2
\bibitem[\protect\citeauthoryear{Gazzola and Sperone}{2020}]{GazSpe20}
\begin{barticle}
\bauthor{\bsnm{Gazzola}, \binits{F.}},
\bauthor{\bsnm{Sperone}, \binits{G.}}:
\batitle{Steady {N}avier-{S}tokes equations in planar domains with obstacle and
  explicit bounds for unique solvability}.
\bjtitle{Arch. Ration. Mech. Anal.}
\bvolume{238}(\bissue{3}),
\bfpage{1283}--\blpage{1347}
(\byear{2020})
\end{barticle}
\endbibitem

%%% 3
\bibitem[\protect\citeauthoryear{Bocchi and Gazzola}{2023}]{BocGaz23}
\begin{barticle}
\bauthor{\bsnm{Bocchi}, \binits{E.}},
\bauthor{\bsnm{Gazzola}, \binits{F.}}:
\batitle{Asymmetric equilibrium configurations of a body immersed in a 2d
  laminar flow}.
\bjtitle{Z. Angew. Math. Phys.}
\bvolume{74}(\bissue{5}),
\bfpage{180}--\blpage{25}
(\byear{2023})
\end{barticle}
\endbibitem

%%% 4
\bibitem[\protect\citeauthoryear{Bonheure et~al.}{2020}]{BonGalGaz20}
\begin{barticle}
\bauthor{\bsnm{Bonheure}, \binits{D.}},
\bauthor{\bsnm{Galdi}, \binits{G.P.}},
\bauthor{\bsnm{Gazzola}, \binits{F.}}:
\batitle{Equilibrium configuration of a rectangular obstacle immersed in a
  channel flow}.
\bjtitle{Comptes Rendus. Math{\'e}matique}
\bvolume{358}(\bissue{8}),
\bfpage{887}--\blpage{896}
(\byear{2020})
\end{barticle}
\endbibitem

%%% 5
\bibitem[\protect\citeauthoryear{Gazzola and
  Patriarca}{2022}]{gazzolapatriarca}
\begin{barticle}
\bauthor{\bsnm{Gazzola}, \binits{F.}},
\bauthor{\bsnm{Patriarca}, \binits{C.}}:
\batitle{An explicit threshold for the appearance of lift on the deck of a
  bridge}.
\bjtitle{Journal of Mathematical Fluid Mechanics}
\bvolume{24}(\bissue{1}),
\bfpage{1}--\blpage{23}
(\byear{2022})
\end{barticle}
\endbibitem

%%% 6
\bibitem[\protect\citeauthoryear{Patriarca}{2022}]{ClaraNoDEA}
\begin{barticle}
\bauthor{\bsnm{Patriarca}, \binits{C.}}:
\batitle{Existence and uniqueness result for a fluid-structure-interaction
  evolution problem in an unbounded 2{D} channel}.
\bjtitle{NoDEA Nonlinear Differential Equations Appl.}
\bvolume{29}(\bissue{4}),
\bfpage{39}--\blpage{38}
(\byear{2022})
\end{barticle}
\endbibitem

%%% 7
\bibitem[\protect\citeauthoryear{Berchio et~al.}{2024}]{BerBonGalGazPer}
\begin{botherref}
\oauthor{\bsnm{Berchio}, \binits{E.}},
\oauthor{\bsnm{Bonheure}, \binits{D.}},
\oauthor{\bsnm{Galdi}, \binits{G.P.}},
\oauthor{\bsnm{Gazzola}, \binits{F.}},
\oauthor{\bsnm{Perotto}, \binits{S.}}:
Equilibrium configurations of a symmetric body immersed in a stationary
  navier-stokes flow in a planar channel.
To appear in SIAM J. Math. Anal.
(2024)
\end{botherref}
\endbibitem

%%% 8
\bibitem[\protect\citeauthoryear{Henrot and Pierre}{}]{HenPie18}
\begin{botherref}
\oauthor{\bsnm{Henrot}, \binits{A.}},
\oauthor{\bsnm{Pierre}, \binits{M.}}:
Shape Variation and Optimization.
EMS Tracts in Mathematics,
vol. 28.
European Mathematical Society (EMS)
\end{botherref}
\endbibitem

%%% 9
\bibitem[\protect\citeauthoryear{Schneider}{}]{schneider}
\begin{botherref}
\oauthor{\bsnm{Schneider}, \binits{R.}}:
Convex Bodies: the {B}runn-{M}inkowski Theory,
expanded edn.
Encyclopedia of Mathematics and its Applications,
vol. 151.
Cambridge University Press
\end{botherref}
\endbibitem

%%% 10
\bibitem[\protect\citeauthoryear{Gagliardo}{1957}]{Gag57}
\begin{barticle}
\bauthor{\bsnm{Gagliardo}, \binits{E.}}:
\batitle{Caratterizzazioni delle tracce sulla frontiera relative ad alcune
  classi di funzioni in {$n$} variabili}.
\bjtitle{Rend. Sem. Mat. Univ. Padova}
\bvolume{27},
\bfpage{284}--\blpage{305}
(\byear{1957})
\end{barticle}
\endbibitem

%%% 11
\bibitem[\protect\citeauthoryear{Galdi}{2011}]{Galdi-steady}
\begin{bbook}
\bauthor{\bsnm{Galdi}, \binits{G.P.}}:
\bbtitle{An Introduction to the Mathematical Theory of the {N}avier-{S}tokes
  Equations : Steady-state Problems},
\bedition{2}nd edn.
\bsertitle{Springer Monographs in Mathematics},
p. \bfpage{1018}.
\bpublisher{Springer},
\blocation{New York}
(\byear{2011})
\end{bbook}
\endbibitem

%%% 12
\bibitem[\protect\citeauthoryear{Pironneau}{1974}]{Piro74}
\begin{barticle}
\bauthor{\bsnm{Pironneau}, \binits{O.}}:
\batitle{On optimum design in fluid mechanics}.
\bjtitle{J. Fluid Mech.}
\bvolume{64},
\bfpage{97}--\blpage{110}
(\byear{1974})
\end{barticle}
\endbibitem

%%% 13
\bibitem[\protect\citeauthoryear{Montoya et~al.}{2018}]{montoya2018}
\begin{barticle}
\bauthor{\bsnm{Montoya}, \binits{M.C.}},
\bauthor{\bsnm{Hern{\'a}ndez}, \binits{S.}},
\bauthor{\bsnm{Nieto}, \binits{F.}}:
\batitle{Shape optimization of streamlined decks of cable-stayed bridges
  considering aeroelastic and structural constraints}.
\bjtitle{Journal of Wind Engineering and Industrial Aerodynamics}
\bvolume{177},
\bfpage{429}--\blpage{455}
(\byear{2018})
\end{barticle}
\endbibitem

\end{thebibliography}
\end{document}